\newcommand{\N}{\mathbb{N}}
\newcommand{\R}{\mathbb{R}}
 \renewcommand*\env@matrix[1][*\c@MaxMatrixCols l]{%
   \hskip -\arraycolsep
   \let\@ifnextchar\new@ifnextchar
   \array{#1}}
\DeclareMathOperator{\Li}{\mathrm{Li}}
\journalname{}
\begin{document}
\allowdisplaybreaks

\title{%
  Pan-Xu conjecture and reduction formulas for polylogarithms}

\author{Marian Gen\v cev}

\institute{%
\raise 2pt \hbox{\scriptsize\llap{*\,}}%
  Corresponding author: M. Gen\v cev
   (marian.gencev@vsb.cz)\at
   Faculty of Economics, VSB -- Technical University of Ostrava,\\
   17. listopadu 2172/15, 708\,00 Ostrava,
   Czech Republic}

\date{Received: date / Accepted: date}

\maketitle

\begin{abstract}
The objective of the paper is the study of Mneimneh-like sums with a parametric variant of the multiple harmonic-star values. We generalize and resolve the Pan-Xu conjecture on generalized Mneimneh-like sums and present their transformation. As an application, we deduce new reduction formulas for specific multiple polylogarithms enabling lowering their depth, and provide additional findings on arithmetic means of multiple harmonic-star values, resulting in new representations of arbitrary multiple zeta-star values.

\keywords{%
  polylogarithm\and
  multiple zeta value\and
	arithmetic mean}
\subclass{11M32 \and 40A05}
\end{abstract}

\DeclareFontSeriesDefault[rm]{bf}{b}

\section{Introduction}

\rlap{\smash{\hskip 2.2cm \raise 10.2cm \hbox{\scriptsize *}}}
In 2023, Mneimneh~\cite{mneimneh} published a paper devoted to applications of specific binomial\discretionary{-}{-}{-}harmonic sums, and proved the simply-looking identity
\begin{equation}
\label{eq:mneimneh}
\sum_{k=1}^n\binom nk\cdot p^k\cdot (1-p)^{n-k}\cdot H_k
	=\sum_{k=1}^n\frac{1-(1-p)^k}k,
\end{equation}
with $0\le p\le 1$ and $H_k:=1+1/2+\cdots+1/k$ denoting the harmonic number. Short after Mneimneh's paper \cite{mneimneh} appeared, Campbell~\cite{campbell} published two additional proofs of~\eqref{eq:mneimneh}; first, by employing an integral representation of~$H_k$, and second, by a~modification of the Wilf\discretionary{-}{-}{-}Zeilberger method. Even if Campbell mentioned that further generalizations of~\eqref{eq:mneimneh} are possible, especially with higher powers of harmonic numbers $H_k^m$, only a few generalizations of \eqref{eq:mneimneh} has been investigated recently. One of the recent modifications of \eqref{eq:mneimneh} was examined by Komatsu and Wang~\cite{komatsu} who focused on the case with hyperharmonic numbers.

Independent of the analyses in~\cite{campbell,komatsu,mneimneh}, we proposed in~\cite{gencev.dm} another generalization of Mneimneh's binomial sums in the form
\begin{equation}
\label{eq:mneimneh.ap.def}
M_n^{(s)}(a,p)
	:=\sum_{k=1}^n\binom nk\cdot p^k\cdot (1-p)^{n-k}\cdot H_k^{(s)}(a),
\end{equation}
where $s\in\N$, $a,p\in\R$, and $H_k^{(s)}(a):=\sum_{j=1}^ka^j/j^s$, $a\in\R$, denotes the generalized harmonic number of order $s$. As a result, we deduced the transformation identity, see \cite[Theorem~2.1 and Eq.~(6)]{gencev.dm},
\begin{equation}
\label{eq:gencev}
M_n^{(s)}(a,p)
	=\sum_{n\ge n_1\ge\cdots\ge n_s\ge 1}
		\frac{(1-p)^{n_1}}{n_1\cdots n_s}\cdot
		\left(\left(1+\frac{ap}{1-p}\right)^{n_s}-1\right)
\end{equation}
with the right-hand side in the form of partial sums of specific nested series. The corresponding infinite series are called {\itshape multiple polylogarithms} and defined by
\begin{equation}
\label{eq:polylog.def}
\Li^\star_{s_1,\dots ,s_d}(x_1,\dots,x_d)
	:=\sum_{n_1\ge\cdots\ge n_d\ge 1}
		\prod_{i=1}^d\frac{x_i^{n_i}}{n_i^{s_i}},
\qquad
s_i\in\N,
\quad
x_i\in\R.
\end{equation}

\begin{remark}
We note that the one-dimensional polylogarithmic function, corresponding to $d=1$ in \eqref{eq:polylog.def}, is called the polylogarithm function. In this simple case, the star symbol is usually dropped, i.e.~$\Li_s(x)=\Li^\star_s(x)=\sum_{n=1}^\infty x^n/n^s$. Furthermore, throughout this paper, we often employ the polylogarithmic values with the arguments $x_i=1$, $i=1,\dots,d$, whose standardized notation is $\zeta^\star(s_1,\dots,s_d)$, i.e.
\[
\Li^\star_{s_1,\dots,s_d}(1,\dots,1)
	=:\zeta^\star(s_1,\dots,s_d)
\]
with $s_i\in\N$ and $s_1>1$, to ensure the convergence. The values $\zeta^\star(s_1,\dots,s_d)$ are called {\itshape multiple zeta-star values} and are intensively studied due to their interesting mathematical properties, see Zhao~\cite{zhao} and Eie~\cite{eie.book} for an overview, and applications in physics, see Smirnov~\cite{smirnov} and Weinzierl~\cite{weinzierl}. The definition of these sums goes back to Hoffman~\cite{hoffman.pjm} and Zagier~\cite{zagier} who invented these values independently around 1992.
\end{remark}

One of our applications deduced in \cite{gencev.dm} led to the following new relation.

\begin{theorem}[{\cite[Theorem 2.2]{gencev.dm}}]
\label{th.series}
Assume that $s\in\mathbb N$, $s\ge 2$, and $a,p\in\mathbb R$ with $p\neq 1$. Then
\begin{equation}
\label{eq:series}
\Li_s(a)
	=\Li^\star_{\{1\}_s}\bigl(1-p,\{1\}_{s-2},1+\tfrac{ap}{1-p}\bigr)
	-\Li^\star_{\{1\}_s}\bigl(1-p,\{1\}_{s-1}\bigr),
\end{equation}
where $|a|\le\min\bigl(1,2/p-1\bigr)$, and $\{1\}_r$ stands for $1$ repeated $r$ times.
\end{theorem}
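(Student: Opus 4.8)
The plan is to obtain \eqref{eq:series} as the limiting case $n\to\infty$ of the finite transformation \eqref{eq:gencev}, so that the whole argument reduces to identifying the limits of the two sides and justifying the interchange of limit and summation. Throughout I write $q:=1+\tfrac{ap}{1-p}$, so that the bracket on the right of \eqref{eq:gencev} is $q^{n_s}-1$. Note also that the hypothesis $|a|\le\min(1,2/p-1)$ forces $2/p-1\ge0$, i.e.\ the admissible range of $p$ is $0<p\le2$ with $p\neq1$; in particular $|1-p|<1$ on $0<p<2$, which I will use repeatedly.

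The right-hand side is the transparent part. Letting $n\to\infty$ in the nested partial sum of \eqref{eq:gencev}, the constraint $n\ge n_1$ disappears and the summation ranges over all $n_1\ge\cdots\ge n_s\ge1$. Splitting $q^{n_s}-1$ and matching the two resulting series termwise against the definition \eqref{eq:polylog.def} (all exponents equal to $1$, outermost argument $1-p$, innermost argument $q$ in the first series and $1$ in the second, every intermediate argument equal to $1$) yields
\[
\sum_{n_1\ge\cdots\ge n_s\ge1}\frac{(1-p)^{n_1}}{n_1\cdots n_s}\bigl(q^{n_s}-1\bigr)
=\Li^\star_{\{1\}_s}\bigl(1-p,\{1\}_{s-2},q\bigr)-\Li^\star_{\{1\}_s}\bigl(1-p,\{1\}_{s-1}\bigr),
\]
which is exactly the right-hand side of \eqref{eq:series}. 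The convergence of these two multiple polylogarithms is controlled by the size of the two active arguments $1-p$ and $q$; since $(1-p)q=1-p(1-a)$, the condition $|a|\le 2/p-1$ is precisely what keeps $|(1-p)q|\le1$ and hence secures convergence when $|q|>1$.

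For the left-hand side I start from the definition \eqref{eq:mneimneh.ap.def} and write $H_k^{(s)}(a)=\Li_s(a)-R_k$ with tail $R_k:=\sum_{j>k}a^j/j^s$, which tends to $0$ because $|a|\le1$ and $s\ge2$ make $\Li_s(a)=\sum_{j\ge1}a^j/j^s$ convergent. Using $\sum_{k=1}^n\binom nk p^k(1-p)^{n-k}=1-(1-p)^n$ this gives $M_n^{(s)}(a,p)=\Li_s(a)\bigl(1-(1-p)^n\bigr)-\sum_{k=1}^n w_{n,k}R_k$, where $w_{n,k}:=\binom nk p^k(1-p)^{n-k}$. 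Since $|1-p|<1$, the first term tends to $\Li_s(a)$, so the claim reduces to showing $\sum_{k=1}^n w_{n,k}R_k\to0$.

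This last limit is the main obstacle, and it is exactly here that the full strength of $|a|\le\min(1,2/p-1)$ is used, through two different mechanisms. For $0<p<1$ the weights $w_{n,k}$ form a probability distribution concentrating on $k\approx np\to\infty$; splitting the sum at a threshold beyond which $|R_k|<\varepsilon$ and using that the low-$k$ mass vanishes shows $\sum_k w_{n,k}R_k\to0$, and here $|a|\le1$ (which equals $\min(1,2/p-1)$ since $2/p-1\ge1$) suffices. For $1<p<2$ the weights are signed with total mass $\sum_k|w_{n,k}|=(2p-1)^n\to\infty$, so no crude bound works; instead $|a|\le 2/p-1<1$ forces geometric decay $|R_k|=O(|a|^k)$, and then $\sum_k|w_{n,k}|\,|a|^k\le(p|a|+p-1)^n\to0$ precisely because $p|a|+p-1\le1$ is equivalent to $|a|\le2/p-1$. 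The boundary case $|a|=2/p-1$ (where the $k^{-s}$ factors in $R_k$ must be exploited) and the excluded value $p=1$ (a genuine singularity, which is why one cannot simply continue analytically across it) require the only additional care in the argument.
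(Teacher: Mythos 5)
Your proposal is correct in outline and follows the same macro-strategy that the paper uses (note the paper itself imports Theorem~\ref{th.series} from \cite{gencev.dm} without reproving it, but its proof of the generalization, Theorem~\ref{th:Li.1}, of which Theorem~\ref{th.series} is the case $d=1$, $m_1=s-2$, shows the intended method): let $n\to\infty$ in \eqref{eq:gencev}, identify the right-hand side with the difference of the two polylogarithms, and show that a weighted binomial average of the tails of $\Li_s(a)$ vanishes. Where you genuinely diverge is the remainder estimate. The paper shifts indices $n_i\mapsto n_i+k$ inside the tail, which extracts a factor $a^k$ that is absorbed into the weights; it then applies the Toeplitz theorem (Theorem~\ref{th.toeplitz}) with $a_{nk}=\binom nk\,(ap)^k(1-p)^{n-k}$ and $x_k=\sum_{m\ge1}a^m/(m+k)^s\to0$, so that condition (b) reads $\sum_{k}|a_{nk}|=(|ap|+|1-p|)^n\le1$ --- exactly the hypothesis $|a|\le\min(1,2/p-1)$ --- and holds up to and including the boundary, with no case split. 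Your unshifted decomposition with weights $w_{nk}=\binom nk\,p^k(1-p)^{n-k}$ instead forces the two cases $0<p<1$ (nonnegative weights plus binomial concentration) and $1<p<2$ (signed weights plus geometric decay of $R_k$), and your bound $(p|a|+p-1)^n$ tends to $0$ only under the strict inequality $|a|<2/p-1$; at the boundary it equals $1$ for all $n$. You flag this correctly and name the right mechanism, but do not execute it; this is the one step you must still write out, and it is short: for $1<p<2$ one has $|a|\le 2/p-1<1$, hence $|R_k|\le C\,|a|^k(k+1)^{-s}$, and at the boundary $p|a|+(p-1)=1$ the sum $\sum_{k}\binom nk\,(p|a|)^k(p-1)^{n-k}(k+1)^{-s}$ is the mean of $(1+B_n)^{-s}$ with $B_n\sim\mathrm{Bin}(n,\,p|a|)$, which tends to $0$ by the same concentration argument as in your first case. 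Two smaller points: the convergence of the right-hand polylogarithms is governed by $|ap|+|1-p|\le1$, of which your criterion $|(1-p)q|\le1$ is a consequence rather than an equivalent (by the triangle inequality $|1-p+ap|\le|1-p|+|ap|$, and sign cancellation can make the former small while the latter exceeds $1$); and at $p=2$ the hypothesis forces $a=0$, where \eqref{eq:series} is trivially $0=0$ even though $|1-p|=1$, so your standing assumption $|1-p|<1$ loses nothing. In short: same skeleton as the paper, a less streamlined remainder argument whose only missing piece (the boundary case) is completable exactly as you sketch; the paper's index-shift trick is what buys the uniform, split-free treatment.
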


The identity in \eqref{eq:series}, coming from the limiting case of \eqref{eq:gencev} for $n\to\infty$, is appealing from two aspects. First, by temporarily suppressing the discrete parameter $s$, its right-hand side involves two real variables $a,p$, whereas its left-hand side involves only one of them, the~$a$. Second, the computational complexity of both sides is essentially different. While the left-hand side of \eqref{eq:series} is represented by a one-dimensional polylogarithm, the right-hand side is in the form of a difference of $s$-dimensional polylogarithms. As a by-product, we~also obtained in \cite[Corollary~2.3]{gencev.dm} the reductions of the multiple polylogarithms on the right-hand side of~\eqref{eq:series} to one-dimensional polylogarithms in the form
\begin{align*}
\Li^\star_{\{1\}_s}\bigl(1-p,\{1\}_{s-1}\bigr)
	&=-\Li_s\bigl(1-\tfrac1p\bigr),\\[.5em]
\Li^\star_{\{1\}_s}\bigl(1-p,\{1\}_{s-2},1+\tfrac{ap}{1-p}\bigr)
	&=\Li_s(a)
	 -\Li_s\bigl(1-\tfrac1p\bigr),
\end{align*}
where $a,p\in\R$ with $|a|\le 1$, $1/2\le p<1$, and $s\in\N$ with $s\ge 2$. Hence, the last two relations allow us to calculate high-precision approximations of the multiple polylogarithms on the left-hand sides with the help of their one-dimensional `equivalents'.

\subsection{Conjecture of Pan and Xu}

The subject of Mneimneh's sums and their various modifications have dramatically developed in recent times due to the essential popularity of harmonic sums. Just recently, Pan and Xu~\cite{pan.xu} stated a conjecture related to specific Mneimneh-like sums, where the standard harmonic numbers $H_k$ are replaced by much more general objects $\zeta_k^\star(s_1,\dots,s_k)$ defined by
\[
\zeta_k^\star(s_1,\dots,s_d)
	:=\sum_{k\ge n_1\ge\cdots\ge n_d\ge 1}
		\frac1{n_1^{s_1}\cdots n_d^{s_d}}
\]
and called {\itshape multiple harmonic-star values}. Here, $s_i$ are assumed to be positive integers.
\medskip

\noindent
{\bfseries Conjecture (Pan, Xu~\cite{pan.xu})}
{\itshape
Assume that $r\in\N_0$, $n\in\N$, $x,y\in\R$, $x\neq -y$. Furthermore, let $\bm u=(u_1,\dots,u_{r+1})\in\N_0^{r+1}$, $\bm m=(m_1,\dots,m_r)\in\N_0^r$, and set 
\[
|\bm u|_j:=\sum_{i=1}^ju_i,
\qquad\qquad
|\bm m|_j:=\sum_{i=1}^jm_i.
\]
Then}
\begin{align*}
\sum_{k=1}^n&\binom nk\cdot x^ky^{n-k}\cdot
	\zeta^\star_k
	\left(%
		\{1\}_{u_1},m_1+2,\dots,\{1\}_{u_r},m_r+2,\{1\}_{u_{r+1}}
	\right)
\\[.5em]
&=(x+y)^n\cdot
	\sum_{n\ge n_1\ge\cdots\ge n_{|\bm u|_{r+1}+|\bm m|_r+r-1}\ge 1}
	\frac{%
		\bigl(\frac y{x+y}\bigr)^{%
			\sum_{j=1}^r
			\bigl(
				n_{|\bm u|_j+|\bm m|_{j-1}+j-1}-n_{|\bm u|_j+|\bm m|_j+j}
			\bigr)}}{%
		n_1\cdots n_{|\bm u|_{r+1}+|\bm m|_r+r-1}}\times\\
	&\hskip7cm
		\times\left(1-\left(\frac y{x+y}\right)^{n_{|\bm u|_{r+1}+|\bm m|_r+r-1}}\right).
\end{align*}
\medskip

Although complex, the above formula can be simplified if divided by $(x+y)^n$. After that, one can introduce the variable $p:=x/(x+y)$, and consequently, the left-hand side of the conjectured formula becomes $\sum_{k=1}^n\binom nk\cdot p^k\cdot(1-p)^{n-k}\cdot\zeta^\star_k(s_1,\dots,s_d)$ for suitable arguments~$s_i$ according to the left-hand side of the conjectured formula. Similarly as in the formula~\eqref{eq:gencev}, we observe that the above conjecture is related to partial sums of multiple polylogarithms and, therefore, of a~hypothetical significance.

\subsection{Objectives and paper outline}

The base for the research conducted in this work consists of sums that are slightly more general than those introduced by Pan and Xu. In fact, we focus on the properties of the sums
\begin{equation}
\label{eq:xmneimneh.ap.def}
M_n^{(s_1,\dots,s_d)}(a,p)
	:=\sum_{k=1}^n\binom nk\cdot p^k\cdot (1-p)^{n-k}\cdot\zeta_k^\star(s_1,\dots,s_d;a),
\end{equation}
where $s_i\in\N$, $a,p\in\R$, and
\[
\zeta_k^\star(s_1,\dots,s_d;a)
	:=\sum_{k\ge n_1\ge\cdots\ge n_d\ge 1}
		\frac{a^{n_d}}{n_1^{s_1}\cdots n_d^{s_d}}
\]
denotes the {\itshape generalized multiple harmonic-star value}.
\goodbreak

In the following paragraphs, we outline three particular goals of this paper.

\subsubsection*{Goal 1 -- Transformation of Mneimneh-like sums}

Our first goal is to establish a generalized variant of identity \eqref{eq:gencev} relating to the `higher' Mneimneh-like sums $M_n^{(s_1,\ldots,s_d)}(a,p)$. Our corresponding result presented in Theorem~\ref{th:main} encompasses the conjecture of Pan and Xu and extends its statement due to the introduced parameter $a$. Even if stating Theorem~\ref{th:main} can be perceived as a pure mathematical vagary consisting of writing one sum to a new one (even more complicated, see the right-hand side of~\eqref{eq:gencev} as well as the conjecture of Pan and Xu), our main impetus for studying these sums lies in finding a generalization of Theorem~\ref{th.series} providing a link to multiple polylogarithms. Therefore, Theorem~\ref{th:main} is one of our main results.

\subsubsection*{Goal 2 -- Polylogarithm relations}

As just anticipated, our second goal is to provide applications of Theorem~\ref{th:main} to multiple polylogarithms. We show in Sect.~\ref{sec:polylog} that the transformed form of $M_n^{(s_1,\dots,s_d)}(a,p)$, see~\eqref{eq:main}, is still suitable for studying properties of specific multiple polylogarithms whose understanding is currently considered as highly incomplete. We present the complex Theorems~\ref{th:Li.1} and~\ref{th:Li.2} aiming to show that the value $\Li_{\bm s}^\star(\{1\}_{\|\bm s\|-1},a)$, where $\|\boldsymbol s\|:=\dim\boldsymbol s$, can be written as a~difference of another multiple polylogarithms containing one more additional parameter~$p$, not involved in $\Li_{\bm s}^\star(\{1\}_{\|\bm s\|-1},a)$. To present a very simple instance, Example~\ref{ex:Li.1} (based on Theorem~\ref{th:Li.1} with $a=1$) implies the somewhat strange formula
\[
\Li^\star_{1,1,1,1}\bigl(1-p,\tfrac1{1-p},1-p,\tfrac1{1-p}\bigr)
-\Li^\star_{1,1,1,1}\bigl(1-p,\tfrac1{1-p},1-p,1\bigr)
=\tfrac{7}{360}\,\pi^4,
\]
where $p\in(0,1)$ is arbitrary. Nevertheless, our investigations of Mneimneh's generalized sums allow us to uncover new general properties of multiple polylogarithms. For example, we present the reduction formula, see Corollary~\ref{cr:Li.11},
\[
\Li^\star_{\{1\}_{|\bm s|}}
		\left(
			\textstyle
			\bigsqcup_{i=1}^{d-1}
			\bigl\{
				1-p,\{1\}_{m_i},\frac1{1-p},\{1\}_{u_i}
			\bigr\},
			1-p,\{1\}_{m_d+1}
		\right)
  =-\Li^\star_{\bm s}
		\left(
			\{1\}_{\|\bm s\|-1},1-\tfrac1p
		\right),
\]
where $\sqcup$ stands for concatenation of strings, $|\boldsymbol s|=s_1+\cdots+s_d$ denotes the sum of all components of $\boldsymbol s$, and $\|\boldsymbol s\|=\dim\boldsymbol s$. We refer readers to Sect.~\ref{sec:polylog} for more general scenarios and precise assumptions.

\subsubsection*{Goal 3 -- Arithmetic means of multiple harmonic numbers}

The third goal extends the described theory in a different sense. Our general result presented in Theorem~\ref{th:mean} provides a transformation of the arithmetic mean of the elements $\zeta_k^\star(\bm s;a)$, $k=0,1,\dots,n$. The corollaries of Theorem~\ref{th:mean} include new representations of arbitrary zeta\discretionary{-}{-}{-}star values $\zeta^\star(\bm s)$ or, more generally, of arbitrary convergent values $\Li_{\bm s}^\star(\{1\}_{\|\bm s\|-1},a)$, see Corollaries~\ref{cr:mean.1} and~\ref{cr:mean.2}.

\section{Transformation of Mneimneh-like sums}
\label{sec:transf}

\subsection{Statement and examples}

In this section, we state the main transformation theorem allowing us to rewrite the Mneimneh\discretionary{-}{-}{-}like sum $M_n^{(s_1,\dots,s_d)}(a,p)$ to a partial sum of a specific multiple polylogarithm. The following statement forms the technical base for our applications, which are described in the rest of this paper.

\begin{theorem}
\label{th:main}
Assume that $\bm s:=(s_1,\dots,s_d)\in\mathbb N^d$, $a,p\in\mathbb R$, and $d\in\N$. Then
\begin{equation}
\label{eq:main}
M_n^{(\bm s)}(a,p)
	=\sum_{n\ge n_1\ge\cdots\ge n_{|\bm s|}\ge 1}
	 \frac{\prod_{r=1}^d(1-p)^{n_{|\bm s|_{r-1}+1}-n_{|\bm s|_r}}}{n_1\cdots n_{|\bm s|}}\cdot
	 \Bigl[
		(1-p+ap)^{n_{|\bm s|}}
	 -(1-p)^{n_{|\bm s|}}
	 \Bigr],
\end{equation}
where $|\bm s|_r:=\sum_{i=1}^rs_i$ and $|\bm s|:=|\bm s|_d$.
\end{theorem}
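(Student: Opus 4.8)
The plan is to prove \eqref{eq:main} by reducing it, through an induction that strips off the leading exponent $s_1$ one unit at a time, to the single-index transformation \eqref{eq:gencev}, which serves as the base case. The starting point is to interchange the order of summation in $M_n^{(\bm s)}(a,p)$. Since $\zeta_k^\star(\bm s;a)$ is a partial sum carrying the restriction $k\ge n_1$, exchanging it with the binomial weight concentrates the whole $p$-dependence in a single tail,
\[ M_n^{(\bm s)}(a,p)=\sum_{n\ge n_1\ge\cdots\ge n_d\ge1}\frac{a^{n_d}}{n_1^{s_1}\cdots n_d^{s_d}}\,T(n_1),\qquad T(j):=\sum_{k=j}^n\binom nk p^k(1-p)^{n-k}. \]
The decisive tool is the negative-binomial (``waiting-time'') form of this tail,
\[ T(j)=p^{\,j}\sum_{\ell=j}^n\binom{\ell-1}{j-1}(1-p)^{\ell-j}, \]
which already carries the geometric factors $(1-p)^{\ell-j}$ that will become the exponents in \eqref{eq:main}.

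Feeding this into the sum and using the elementary identity $\binom{\ell-1}{j-1}=\tfrac{j}{\ell}\binom{\ell}{j}$ transfers one reciprocal power onto a fresh outer index $\ell$, giving the self-similar intermediate form
\[ M_n^{(\bm s)}(a,p)=\sum_{\ell=1}^n\frac1\ell\sum_{\ell\ge n_1\ge\cdots\ge n_d\ge1}\binom{\ell}{n_1}p^{n_1}(1-p)^{\ell-n_1}\frac{a^{n_d}}{n_1^{s_1-1}n_2^{s_2}\cdots n_d^{s_d}}. \]
The inner object is again a Mneimneh-type sum at level $\ell$, but with the leading exponent lowered from $s_1$ to $s_1-1$; the factor $1/\ell$ supplies one new index of the descending chain of \eqref{eq:main} and records no power of $1-p$ beyond what is displayed. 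When the leading block has size one, this closes immediately: with $s_1=1$ the factor $n_1^{s_1-1}=1$ disappears, and summing $n_1$ from $n_2$ to $\ell$ reconstitutes the tail at level $\ell$, yielding the clean depth-drop
\[ M_n^{(1,s_2,\dots,s_d)}(a,p)=\sum_{\ell=1}^n\frac1\ell\,M_\ell^{(s_2,\dots,s_d)}(a,p), \]
which is exactly the size-one leading block of \eqref{eq:main} (with $\ell=n_1$ and block factor $(1-p)^{0}=1$).

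The base case $d=1$ is precisely \eqref{eq:gencev}: for a single block the inner sum collapses by the binomial theorem to $(1+b)^{n_{|\bm s|}}-1$ with $b:=ap/(1-p)$, and the accumulated weight $(1-p)^{n_{|\bm s|}}$ combines with it, using $(1-p)(1+b)=1-p+ap$, to produce exactly the bracket
\[ (1-p)^{n_{|\bm s|}}\bigl[(1+b)^{n_{|\bm s|}}-1\bigr]=(1-p+ap)^{n_{|\bm s|}}-(1-p)^{n_{|\bm s|}}. \]
Thus the two geometric bases in \eqref{eq:main} arise simply from re-expanding $(1-p)^{n_{|\bm s|}}(1+b)^{n_{|\bm s|}}$, and the $d=1$ instance of \eqref{eq:main} matches \eqref{eq:gencev} after the rewriting $(1-p)^{n_1}\bigl((1+\tfrac{ap}{1-p})^{n_s}-1\bigr)=(1-p)^{n_1-n_s}\bigl[(1-p+ap)^{n_s}-(1-p)^{n_s}\bigr]$. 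Iterating the reduction above — equivalently, running the induction on the total weight $|\bm s|$ and on the depth $d$ — inserts the remaining chain indices one block at a time, each step contributing a single factor $1/n_i$ and one more power of $1-p$.

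I expect the main obstacle to be combinatorial bookkeeping rather than any single hard estimate. When $s_1\ge2$ the intermediate form no longer reduces to another genuine $M_\ell^{(\cdot)}$, since the binomial now sits directly on the top chain index with a residual power; one must peel \emph{within} the block, and here the powers of $1-p$ couple adjacent indices. The delicate point is to verify that the indices inserted block by block thread correctly through the global chain $n\ge n_1\ge\cdots\ge n_{|\bm s|}\ge1$ — in particular that the top of each new block is bounded above by the bottom of the preceding one — and, above all, that the recorded powers telescope into exactly $\prod_{r=1}^d(1-p)^{n_{|\bm s|_{r-1}+1}-n_{|\bm s|_r}}$ with no residual cross-terms between blocks. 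Making the inductive hypothesis precise enough to track these relative upper bounds and coupled exponents through each peeling step is the crux; once the per-block reduction is stated with the correct bounds, the induction closes and the base case \eqref{eq:gencev} completes the proof.
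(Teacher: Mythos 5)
Your proposal is correct in every step it actually writes down, and it takes a genuinely different route from the paper. The paper proves \eqref{eq:main} by induction on the \emph{trailing} end of $\bm s$: with \eqref{eq:gencev} as the case $d=1$, it appends a component $1$ by applying the integral operator $\int_{1-A}^1\bigl(M_n^{(\bm s)}(1,p)-M_n^{(\bm s)}(1-a,p)\bigr)\,\mathrm da/a$ and then raises the last component to $s_{d+1}\ge 2$ by iterated integration in the parameter $a$, the engine being the elementary identity of Lemma~\ref{lm:aux} and its variant \eqref{eq:aux.2}, which convert brackets $(1+Ax)^n-1$ into new harmonic indices. You instead strip the \emph{leading} component by purely discrete means: summation interchange, the negative-binomial form of the binomial tail, and $\binom{\ell-1}{j-1}=\tfrac j\ell\binom\ell j$. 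Your intermediate form and the $s_1=1$ depth-drop $M_n^{(1,s_2,\dots,s_d)}(a,p)=\sum_{\ell=1}^n\tfrac1\ell M_\ell^{(s_2,\dots,s_d)}(a,p)$ both check out. Two small caveats: since Theorem~\ref{th:main} allows arbitrary $a,p\in\R$, the waiting-time identity must be read as a polynomial identity in $p$ (both sides are polynomials agreeing on $(0,1)$, hence everywhere); and your phrase ``the inner object is again a Mneimneh-type sum'' is loose, since the binomial weight sits on the pinned top chain index rather than acting as an upper cutoff --- which you yourself correct in the final paragraph. What each approach buys: the paper's integral method produces a whole new block and the bracket $(1-p+ap)^{n_{|\bm s|}}-(1-p)^{n_{|\bm s|}}$ in one stroke; yours is integration-free and makes the combinatorial origin of the exponents of $1-p$ transparent.

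The crux you flag as open --- peeling \emph{within} a block when $s_1\ge 2$ --- does close, and with a single further identity: the hockey-stick relation in the form $\tfrac1j\binom\ell j=\sum_{m=j}^{\ell}\tfrac1m\binom mj$ (equivalently $\tfrac1m\binom mj=\tfrac1j\binom{m-1}{j-1}$ summed over $m$). Iterating it $s_1-1$ times on your intermediate form gives
\[
\frac{\binom{\ell}{j}}{j^{\,s_1-1}}
	=\sum_{\ell\ge m_2\ge\cdots\ge m_{s_1}\ge j}
	 \frac{\binom{m_{s_1}}{j}}{m_2\cdots m_{s_1}},
\]
and since the factor $(1-p)^{\ell-j}$ is untouched by these peels, there are no cross-terms to track: setting $n_1=\ell$, $n_r=m_r$, writing $(1-p)^{n_1-j}=(1-p)^{n_1-n_{s_1}}\cdot(1-p)^{n_{s_1}-j}$, and summing over $j$ reconstitutes the binomial tail at level $n_{s_1}$, i.e.\ a genuine lower-depth Mneimneh sum. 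The precise per-block reduction you were missing is
\[
M_n^{(s_1,\dots,s_d)}(a,p)
	=\sum_{n\ge n_1\ge\cdots\ge n_{s_1}\ge 1}
	 \frac{(1-p)^{n_1-n_{s_1}}}{n_1\cdots n_{s_1}}\cdot
	 M_{n_{s_1}}^{(s_2,\dots,s_d)}(a,p)
	\qquad (d\ge 2),
\]
while for $d=1$ the $j$-sum is instead the binomial theorem and yields the bracket directly --- which in fact reproves \eqref{eq:gencev}, so your base case becomes self-contained rather than imported. Induction on $d$ with this recursion delivers \eqref{eq:main} exactly, each block consuming its own $(1-p)$-exponent at its own boundary. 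So your assessment was accurate: the obstacle is bookkeeping, not substance, and the argument completes along precisely the lines you sketched.
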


Before approaching the proof of Theorem~\ref{th:main}, which is postponed to Sect.~\ref{ssec:proof.main}, we present several more straightforward consequences to familiarize the statement.

\begin{example}
\label{ex:first}
We have
\[
\sum_{k=1}^n\binom nk\cdot
	\sum_{j=1}^k\frac{\sum_{i=1}^j\frac{(-1)^{i-1}}{i^2}}{j^3}
	=\sum_{n\ge n_1\ge\cdots\ge n_5\ge 1}
		\frac{2^{n-n_1+n_3-n_4}}{n_1\cdots n_5}.
\]
\end{example}

\begin{proof}[of Example~\ref{ex:first}]
The sum on the left-hand side in the example agrees with $M_n^{(\bm s)}(a,p)$ with $\bm s=(3,2)$, $a=-1$, and $p=1/2$. Therefore, $|\bm s|_1=s_1=3$ and $|\bm s|_2=s_1+s_2=5$. Consequently, $\prod_{r=1}^2(1-p)^{n_{|\bm s|_{r-1}+1}-n_{|\bm s|_r}}=(1-p)^{n_1-n_3}\cdot(1-p)^{n_4-n_5}$. By setting all the given parameters into \eqref{eq:main}, we obtain
\[
\sum_{k=1}^n
	\binom nk\cdot
	\left(\tfrac12\right)^k\cdot\left(\tfrac12\right)^{n-k}\cdot
	\zeta^\star_k(3,2;-1)
 =\sum_{n\ge n_1\ge\cdots\ge n_5\ge 1}
 	\frac{%
		\left(\frac12\right)^{n_1-n_3}\cdot
		\left(\frac12\right)^{n_4-n_5}}{n_1\cdots n_5}\cdot
	\left[
		0-\left(\tfrac12\right)^{n_5}
	\right].
\]
Simplifying the last relation concludes the proof.\qed
\end{proof}

\begin{example}
\label{ex:Mn.112}
Assume that $a,p\in\R$, $p\neq 1$, and $d,m,n\in\N$. Then
\begin{align*}
M_n^{(\{m\}_d)}(a,p)
 &=\sum_{n\ge n_1\ge\cdots\ge n_{md}\ge 1}
	 \frac{\prod_{r=1}^d(1-p)^{n_{mr-m+1}-n_{mr}}}{n_1\cdots n_{md}}\cdot
	\bigl[(1-p+ap)^{n_{md}}-(1-p)^{n_{md}}\bigr],
\end{align*}
where $\{m\}_d$ denotes the argument $m$ repeated $d$ times.
\end{example}

\begin{proof}[of Example~\ref{ex:Mn.112}]
Set $\bm s=(\{m\}_d)$ in Theorem~\ref{th:main}. Then $|\bm s|=md$ and $|\bm s|_r=mr$ for every $r=0,\dots,d$. Therefore, $\prod_{r=1}^d(1-p)^{n_{|\bm s|_{r-1}+1}-n_{|\bm s|_r}}=\prod_{r=1}^d(1-p)^{n_{mr-m+1}-n_{mr}}$. Substituting this form into \eqref{eq:main} concludes the proof.
\qed
\end{proof}

The formula in Example~\ref{ex:Mn.112} simplifies with $m=1$. In this particular instance, we deduce
\begin{equation}
\label{eq:Mn.1}
M_n^{(\{1\}_d)}(a,p)
 =\sum_{n\ge n_1\ge\cdots\ge n_d\ge 1}
	 \frac{(1-p+ap)^{n_d}-(1-p)^{n_d}}{n_1\cdots n_d}.
\end{equation}
We also point out that by setting $a=d=1$ in \eqref{eq:Mn.1}, the identity reduces to Mneimneh's original result~\eqref{eq:mneimneh}. Additionally, by dividing \eqref{eq:Mn.1} by $(1-p)^n$ with $p\to\infty$, we obtain
\begin{equation}
\label{eq:dilcher+}
\sum_{k=1}^n\binom nk\cdot (-1)^k\cdot\zeta^\star_k(\{1\}_d;a)
	=\sum_{\substack{n\ge n_1\ge\cdots\ge n_d\ge 1\\[.2em]n_d=n}}
	 \frac{(1-a)^{n_d}-1}{n_1\cdots n_d}
	=\frac{(1-a)^n-1}{n^d}.
\end{equation}
For example, the last identity with $a=2$ implies the somewhat curious formula
\begin{equation}
\label{eq:2nd.cases}
\sum_{k=1}^n\binom nk\cdot (-1)^{k-1}\cdot\zeta^\star_k(\{1\}_d;2)
	=\begin{cases}
		0						&\text{$n$ even},\\
		\frac2{n^d}	&\text{$n$ odd},
	 \end{cases}
\end{equation}
which enables us to arrive at the following corollary.

\begin{corollary}
\label{cr:dilcher-like}
For every $d,n\in\N$, we have
\[
\sum_{k=1}^{\left\lfloor\frac{n+1}2\right\rfloor}
	\binom n{2k-1}\cdot\frac1{(2k-1)^d}
	=\frac{\zeta^\star_n(\{1\}_d;2)}2.
\]
\end{corollary}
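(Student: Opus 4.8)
The plan is to derive the corollary directly from the closed-form evaluation \eqref{eq:2nd.cases} by means of \emph{binomial inversion}. First I would package \eqref{eq:2nd.cases} as one half of an inverse pair. Set $a_k:=\zeta^\star_k(\{1\}_d;2)$ for $k\ge 1$ and adopt the natural convention $a_0:=0$ arising from the empty multiple harmonic-star value. Since the $k=0$ term then contributes nothing, the summation range in \eqref{eq:2nd.cases} may be extended to start at $k=0$; writing $b_n$ for the negated right-hand side, we obtain the signed transform
\[
b_n:=\sum_{k=0}^n(-1)^k\binom nk a_k,
\qquad
b_n=\begin{cases}0&n\text{ even},\\[.2em]-\dfrac2{n^d}&n\text{ odd}.\end{cases}
\]

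Second, I would invoke the self-inverse (signed) form of binomial inversion: the relation $b_n=\sum_{k=0}^n(-1)^k\binom nk a_k$ is equivalent to $a_n=\sum_{k=0}^n(-1)^k\binom nk b_k$. Applying this inversion recovers $a_n=\zeta^\star_n(\{1\}_d;2)$ purely in terms of the explicitly known $b_k$, which is exactly the kind of closed form we are after.

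Third, I would substitute the values of $b_k$. Only odd indices $k$ survive, and for those we have $(-1)^k=-1$ while $b_k=-2/k^d$, so the two minus signs cancel and
\[
a_n=\sum_{\substack{1\le k\le n\\ k\text{ odd}}}(-1)^k\binom nk b_k
  =2\sum_{\substack{1\le k\le n\\ k\text{ odd}}}\binom nk\frac1{k^d}.
\]
Re-indexing the odd integers via $k=2j-1$ with $1\le j\le\lfloor(n+1)/2\rfloor$, and finally dividing by $2$, then produces the asserted identity.

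The computation is elementary, so the only point demanding care is the sign bookkeeping: one must correctly track the parity-dependent factor $(-1)^k$ against the sign carried by $b_k$, and verify that the convention $a_0=0$ keeps the $k=0$ contributions consistent on both sides of the inversion. Beyond this accounting I do not anticipate a genuine obstacle.
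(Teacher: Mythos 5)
Your proof is correct and follows essentially the same route as the paper: both derive the identity from \eqref{eq:2nd.cases} via the signed binomial inversion $b_n=\sum_{k=0}^n(-1)^k\binom nk a_k \Leftrightarrow a_n=\sum_{k=0}^n(-1)^k\binom nk b_k$ (the paper cites Riordan for this). Your version merely spells out the sign bookkeeping and the $a_0=0$ convention that the paper leaves implicit, which is a harmless refinement rather than a different argument.
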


\begin{proof}
Recall the binomial inverse relation, i.e.~if $a_n=\sum_{k=0}^n\binom nk\cdot(-1)^k\cdot b_k$, then $b_n=\sum_{k=0}^n\binom nk\cdot(-1)^k\cdot a_k$, see e.g.~Riordan~\cite[p.~43]{riordan}. Applying this principle to~\eqref{eq:2nd.cases} with $b_k=\zeta^\star_k(\{1\}_d;2)$ and $a_n$ the right-hand side of \eqref{eq:2nd.cases}, we immediately deduce the sought identity.
\qed
\end{proof}

The above relation is connected with the so-called Dilcher's formula (see Dilcher~\cite[p.~93]{dilcher} or Nica~\cite[Eq.~(3)]{nica}) stating that
$
\sum_{k=1}^n\binom nk\cdot(-1)^{k-1}/k^d
	=\zeta_n^\star(\{1\}_d).
$
Corollary~\ref{cr:dilcher-like} provides its odd counterpart. The even counterpart with $\sum_{k=1}^{\lfloor n/2\rfloor}\binom n{2k}/(2k)^d$ can then easily be obtained as a~complement. Furthermore, setting $a=1$ in~\eqref{eq:dilcher+} and applying the principle of the binomial inversion lead to Dilcher's identity directly.

We finish our initial considerations by the following statement, enabling us to exclude uninteresting cases from the rest of the paper.
\begin{lemma}
\label{lm:p01}
For $p=0$ and $p=1$, the summation transformation \eqref{eq:main} is trivial.
\end{lemma}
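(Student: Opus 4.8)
The plan is to verify directly that both $p=0$ and $p=1$ cause both sides of \eqref{eq:main} to degenerate, so that the identity carries no content beyond a trivial equality. I would organize the argument by treating the two degenerate values separately, since the nature of the triviality differs between them, and in each case I would evaluate the left-hand side $M_n^{(\bm s)}(a,p)$ from its definition \eqref{eq:xmneimneh.ap.def} and then check that the right-hand side of \eqref{eq:main} collapses in the same way.

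First I would handle $p=0$. On the left-hand side, the factor $p^k$ appearing in \eqref{eq:xmneimneh.ap.def} vanishes for every $k\ge 1$, so the entire sum $M_n^{(\bm s)}(a,0)$ equals $0$. On the right-hand side of \eqref{eq:main}, the bracketed term becomes $(1-0+0)^{n_{|\bm s|}}-(1-0)^{n_{|\bm s|}}=1-1=0$, so every summand vanishes and the right-hand side is also $0$. Thus the transformation reduces to the vacuous identity $0=0$.

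Next I would treat $p=1$. Here the factor $(1-p)^{n-k}$ in \eqref{eq:xmneimneh.ap.def} forces all terms with $k<n$ to vanish, leaving only the single term $k=n$, so $M_n^{(\bm s)}(a,1)=\zeta_n^\star(\bm s;a)$, a finite explicit quantity. On the right-hand side of \eqref{eq:main}, each factor $(1-p)^{n_{|\bm s|_{r-1}+1}-n_{|\bm s|_r}}$ in the product becomes $0^{\,(\cdot)}$; one must observe that the inner nested structure (together with the bracketed term, in which $(1-p)^{n_{|\bm s|}}=0$) forces the surviving contributions onto the diagonal where all the exponents of $(1-p)$ vanish, exactly as in the passage from \eqref{eq:Mn.1} to \eqref{eq:dilcher+}. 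Tracking which summands survive as $p\to 1$ is the only mildly delicate point, and it follows by noting that $0^0=1$ is used only when the relevant exponents are zero, i.e.\ on the diagonal $n_1=\cdots=n_{|\bm s|}$, recovering the same explicit value $\zeta_n^\star(\bm s;a)$.

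I expect the main obstacle to be purely interpretive rather than computational: one must decide on the convention for $0^0$ and confirm that under the natural convention the surviving diagonal terms reproduce the left-hand side, so that no genuine summation transformation is taking place. Once this bookkeeping is settled, the statement is immediate, and it justifies excluding $p\in\{0,1\}$ from the remainder of the paper since \eqref{eq:main} conveys no new information there.
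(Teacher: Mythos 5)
Your $p=0$ case is fine and matches the paper, which disposes of it in one line. The genuine gap is in the $p=1$ case, specifically in your identification of which summands on the right-hand side of \eqref{eq:main} survive. The exponents of $(1-p)$ in the product are $n_{|\bm s|_{r-1}+1}-n_{|\bm s|_r}$ for $r=1,\dots,d$, so (with the convention $0^0=1$, which you correctly flag) a summand survives iff $n_{|\bm s|_{r-1}+1}=n_{|\bm s|_r}$ for \emph{every} $r$. By the weak ordering this forces equality only \emph{within each block} of $s_r$ consecutive indices, say $n_{|\bm s|_{r-1}+1}=\cdots=n_{|\bm s|_r}=:j_r$, while the $d$ block values remain free subject to $n\ge j_1\ge\cdots\ge j_d\ge 1$. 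Summing over this block-diagonal set (with the bracket equal to $a^{n_{|\bm s|}}$, since $(1-p)^{n_{|\bm s|}}$ vanishes at $p=1$) gives $\sum_{n\ge j_1\ge\cdots\ge j_d\ge 1}a^{j_d}/\bigl(j_1^{s_1}\cdots j_d^{s_d}\bigr)=\zeta^\star_n(\bm s;a)$, which is exactly how the paper closes the argument.

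Your proposal instead asserts that the survivors lie on the full diagonal $n_1=\cdots=n_{|\bm s|}$. That set is strictly smaller than the block-diagonal set whenever $d\ge 2$, and summing over it would yield the depth-one value $\sum_{j=1}^n a^j/j^{|\bm s|}$, which does not equal $\zeta^\star_n(\bm s;a)$: for $\bm s=(1,1)$ and $a=1$ one has $\zeta^\star_n(1,1)=\tfrac12\bigl(H_n^2+H_n^{(2)}\bigr)\neq H_n^{(2)}$. The case $\bm s=(\{1\}_d)$ shows the failure even more starkly, since there every exponent is identically zero and \emph{no} constraint arises at all (compare \eqref{eq:Mn.1}). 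Your appeal to the passage from \eqref{eq:Mn.1} to \eqref{eq:dilcher+} is also misplaced: the constraint $n_d=n$ there is forced by dividing by $(1-p)^n$ and letting $p\to\infty$, a mechanism unrelated to the $0^0$ bookkeeping needed here. So while your final claimed value happens to be correct, the reasoning that produces it would, if carried out literally, compute the wrong sum; the block-wise analysis is the actual content of the lemma for $p=1$.
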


\begin{proof}
For $p=0$, the statement immediately follows from the definition of $M_n^{(\bm s)}(a,p)$ and~\eqref{eq:main}. For $p=1$, the situation requires more attention. First of all, the definition of $M_n^{(\bm s)}(a,p)$ in~\eqref{eq:xmneimneh.ap.def} implies that $M_n^{(\bm s)}(a,1)=\zeta^\star_n(\bm s;a)$. Next, we concentrate on the right-hand side of \eqref{eq:main} assuming that $\bm s=(s_1,\dots,s_d)$. Since
\[
(1-p)^{n_{|\bm s|_{r-1}+1}-n_{|\bm s|_r}}\Bigr |_{p=1}
	=\begin{cases}
		0&\text{for $n_{|\bm s|_{r-1}+1}>n_{|\bm s|_r}$},\\[.5em]
		1&\text{for $n_{|\bm s|_{r-1}+1}=n_{|\bm s|_r}$},
	 \end{cases}
\]
this and \eqref{eq:main} yield
\[
M_n^{(\bm s)}(a,1)
 =\sum_{%
		\substack{%
 			n\ge n_1\ge\cdots\ge n_{|\bm s|}\ge 1\\[.2em]
			n_1=n_{|\bm s|_1}\\[.2em]
			n_{|\bm s|_1+1}=n_{|\bm s|_2}\\
			\vdots\\
			n_{|\bm s|_{d-1}+1}=n_{|\bm s|}
			}}
	 \frac{a^{n_{|\bm s|}}}{n_1\cdots n_{|\bm s|}}
 =\sum_{n\ge j_1\ge\cdots\ge j_d\ge 1}
	 \frac{a^{j_d}}{j_1^{s_1}\cdots j_d^{s_d}}
 =\zeta^\star_n(\bm s;a),
\]
where we set $j_r:=n_{|\bm s|_{r-1}+1}=\cdots=n_{|\bm s|_r}$, $r=1,\dots,d$.
\qed
\end{proof}

\subsection{Proof of Theorem~\ref{th:main}}
\label{ssec:proof.main}

We precede the proof of Theorem~\ref{th:main} by the following auxiliary lemma, which will be applied several times within this section.

\begin{lemma}
\label{lm:aux}
Assume that $a,x\in\R$ and $n\in\N$ are arbitrary. Then
\begin{equation}
\label{eq:aux.1}
\int_0^a\frac{(1+Ax)^n-1}A\,\mathrm dA
	=\sum_{j=1}^n\frac{(1+ax)^j-1}j.
\end{equation}
\end{lemma}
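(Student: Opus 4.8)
The plan is to prove the identity
\[
\int_0^a\frac{(1+Ax)^n-1}{A}\,\mathrm dA
	=\sum_{j=1}^n\frac{(1+ax)^j-1}{j}
\]
by expanding the integrand into a polynomial in $A$ and integrating term by term. The key observation is that the numerator $(1+Ax)^n-1$ vanishes at $A=0$, so after dividing by $A$ the integrand has no singularity and the integral is perfectly well defined; this must be checked first so the left-hand side makes sense.

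First I would apply the binomial theorem to write $(1+Ax)^n-1=\sum_{k=1}^n\binom nk(Ax)^k$, where the sum starts at $k=1$ precisely because the $k=0$ term cancels the $-1$. Dividing by $A$ gives $\sum_{k=1}^n\binom nk x^k A^{k-1}$, a genuine polynomial. Then I would integrate each monomial over $[0,a]$, using $\int_0^a A^{k-1}\,\mathrm dA=a^k/k$, to obtain
\[
\int_0^a\frac{(1+Ax)^n-1}{A}\,\mathrm dA
	=\sum_{k=1}^n\binom nk x^k\cdot\frac{a^k}{k}
	=\sum_{k=1}^n\binom nk\frac{(ax)^k}{k}.
\]
This reduces the whole lemma to the purely algebraic identity $\sum_{k=1}^n\binom nk\frac{t^k}{k}=\sum_{j=1}^n\frac{(1+t)^j-1}{j}$ with $t=ax$.

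The remaining step is therefore to verify this binomial identity, which I expect to be the only genuinely nontrivial point. One clean route is to show both sides have the same derivative in $t$ and agree at $t=0$ (both vanish there). Differentiating the right-hand side in $t$ gives $\sum_{j=1}^n(1+t)^{j-1}=\frac{(1+t)^n-1}{t}$ by the finite geometric sum, while differentiating the left-hand side gives $\sum_{k=1}^n\binom nk t^{k-1}=\frac{(1+t)^n-1}{t}$ by the same binomial expansion used above; the two derivatives coincide. Alternatively one can expand $(1+t)^j$ on the right and swap the order of summation over $j$ and the binomial index, collecting the coefficient of each $t^k$. Either way the identity follows, and combining it with the term-by-term integration completes the proof. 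The main obstacle, modest as it is, lies in this algebraic identity rather than in the analysis; the integration itself is routine once the integrand is recognized as a polynomial.
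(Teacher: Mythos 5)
Your proof is correct, but it takes a genuinely different route from the paper's. The paper never introduces binomial coefficients: it factors the integrand via the finite geometric sum, writing $\frac{(1+Ax)^n-1}{A}=x\cdot\frac{(1+Ax)^n-1}{(1+Ax)-1}=x\sum_{j=1}^n(1+Ax)^{j-1}$, and integrates each power directly, producing the right-hand side $\sum_{j=1}^n\frac{(1+ax)^j-1}{j}$ in a single pass (with the case $x=0$ handled separately, since the factorization divides by $(1+Ax)-1=Ax$). You instead expand $(1+Ax)^n-1$ binomially, integrate termwise to get $\sum_{k=1}^n\binom nk\frac{(ax)^k}{k}$, and then prove the polynomial identity $\sum_{k=1}^n\binom nk\frac{t^k}{k}=\sum_{j=1}^n\frac{(1+t)^j-1}{j}$ by matching derivatives; note that your computation of the right-hand derivative, $\sum_{j=1}^n(1+t)^{j-1}=\frac{(1+t)^n-1}{t}$, is exactly the geometric-sum step the paper applies directly at the level of the integrand, so your argument is a detour that routes through a classical binomial--harmonic identity before arriving at the same core computation. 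What your route buys: it needs no case split at $x=0$ (the polynomial expansion is uniform in $x$), and it records the intermediate closed form $\sum_{k=1}^n\binom nk\frac{(ax)^k}{k}$ for the integral, which the paper's shorter proof bypasses. What it costs is one extra reduction step, plus a small point you should make explicit: the geometric identity $\sum_{j=1}^n(1+t)^{j-1}=\frac{(1+t)^n-1}{t}$ holds a priori only for $t\neq 0$, but since both derivatives are polynomials in $t$, agreement for $t\neq 0$ extends to all $t$, and your anchor at $t=0$ then completes the comparison.
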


\begin{proof}
The statement is clear for $x=0$. Therefore, we further assume that $x\neq 0$. Under this condition, we can write
\[
\int_0^a\frac{(1+Ax)^n-1}A\,\mathrm dA
	=x\cdot\int_0^a\frac{(1+Ax)^n-1}{(1+Ax)-1}\,\mathrm dA
	=x\cdot\sum_{j=1}^n\int_0^a(1+Ax)^{j-1}\,\mathrm dA.
\]
Finally, performing the integration concludes the proof.\qed
\end{proof}

Based on Lemma~\ref{lm:aux}, we observe that the similar identity
\begin{equation}
\label{eq:aux.2}
\int_{1-a}^1\frac{(1+Ax)^n-1}A\,\mathrm dA
	=\sum_{j=1}^n\frac{(1+x)^j-(1+x-ax)^j}j
\end{equation}
holds under identical assumptions as in Lemma~\ref{lm:aux}. Of course, for the proof of~\eqref{eq:aux.2}, we apply~\eqref{eq:aux.1} twice. First, with $a=1$, and second, with $1-a$ instead of $a$. This and $\int_{1-a}^1=\int_0^1-\int_0^{1-a}$ then imply \eqref{eq:aux.2} playing an important role in the proof of Theorem~\ref{th:main}.

\begin{remark}
Since the transformation effect of Theorem~\ref{th:main} for $p=1$ is negligible by Lemma~\ref{lm:p01}, we assume $p\neq 1$ in the following proof. Even if the same can be stated concerning $p=0$, this case does not cause technical complications contrary to $p=1$.
\end{remark}

\begin{proof}[of Theorem~\ref{th:main}]
The proof of Theorem~\ref{th:main} can be deduced with the help of the formula in~\eqref{eq:gencev} and with the induction. To enable more lucidity, we split the proof into several parts.
\begin{enumerate}
\item Let us first consider the case $d=1$ in~\eqref{eq:main}. Then $\bm s=(s_1)$, $|\bm s|=|\bm s|_1=s_1$ and, consequently, the identity in \eqref{eq:main} reads
\[
M_n^{(s_1)}(a,p)
	=\sum_{n\ge n_1\ge\cdots\ge n_{s_1}\ge 1}
	 \frac{(1-p)^{n_1-n_{s_1}}}{n_1\cdots n_{s_1}}\cdot
	 \Bigl[
		(1-p+ap)^{n_{s_1}}-(1-p)^{n_{s_1}}
	 \Bigr],
\]
which agrees with \eqref{eq:gencev} with $s=s_1$.
\item Now, assume that \eqref{eq:main} is true for arbitrary $\bm s=(s_1,\dots,s_d)\in\mathbb N^d$, $d\in\N$. We show that this implies Theorem~\ref{th:main} also for the $(d+1)$-tuple $\bm t=(s_1,\dots,s_d,1)$. To see this, we first generate the corresponding Mneimneh-like sum as follows:
\begin{align}
\label{eq:diff.M}
\int_{1-A}^1
 &\Bigl(
		M_n^{(s_1,\dots,s_d)}(1,p)
	 -M_n^{(s_1,\dots,s_d)}(1-a,p)
	\Bigr)\cdot\frac{\mathrm da}a
\\[.5em]
\notag
 &=\sum_{k=1}^n\binom nk\cdot p^k\cdot (1-p)^{n-k}
	 \cdot\int_{1-A}^1\frac{%
		\zeta_k^\star(s_1,\dots,s_d;1)
	 -\zeta_k^\star(s_1,\dots,s_d;1-a)}a\,\mathrm da
\\[.5em]
\notag
 &=\sum_{k=1}^n\binom nk\cdot p^k\cdot (1-p)^{n-k}
	 \sum_{k\ge n_1\ge\cdots\ge n_d\ge 1}
	 \frac 1{n_1^{s_1}\cdots n_d^{s_d}}\cdot
	 \int_{1-A}^1\frac{1-(1-a)^{n_d}}a\,\mathrm da
\\[.5em]
\notag
 &=\sum_{k=1}^n\binom nk\cdot p^k\cdot (1-p)^{n-k}
	 \sum_{k\ge n_1\ge\cdots\ge n_d\ge 1}
	 \frac 1{n_1^{s_1}\cdots n_d^{s_d}}\cdot
	 \zeta_{n_d}^\star(1;A)
\\[.5em]
\label{eq:Mn,1}
 &=M_n^{(s_1,\dots,s_d,1)}(A,p),
\end{align}
where the evaluation of the last integral follows form~\eqref{eq:aux.2}. Next, we apply the inductive assumption, i.e.~formula~\eqref{eq:main}, to the difference in~\eqref{eq:diff.M}. By these facts and by~\eqref{eq:aux.2} and~\eqref{eq:Mn,1}, we obtain
\begin{align*}
M_n^{(s_1,\dots,s_d,1)}&(A,p)\\[.5em]
 &=\int_{1-A}^1
  \Bigl(
		M_n^{(s_1,\dots,s_d)}(1,p)
	 -M_n^{(s_1,\dots,s_d)}(1-a,p)
	\Bigr)\cdot\frac{\mathrm da}a
\\[.5em]
 &=\sum_{n\ge n_1\ge\cdots\ge n_{|\bm s|}\ge 1}
	 \frac{\prod_{r=1}^d(1-p)^{n_{|\bm s|_{r-1}+1}-n_{|\bm s|_r}}}{n_1\cdots n_{|\bm s|}}\cdot
	 \int_{1-A}^1\frac{1-(1-ap)^{n_{|\bm s|}}}a\,\mathrm da
\\[.5em]
 &=\sum_{n\ge n_1\ge\cdots\ge n_{|\bm s|}\ge 1}
	 \frac{\prod_{r=1}^d(1-p)^{n_{|\bm s|_{r-1}+1}-n_{|\bm s|_r}}}{n_1\cdots n_{|\bm s|}}\cdot
	 \sum_{j=1}^{n_{|\bm s|}}\frac{(1-p+Ap)^j-(1-p)^j}j
\\[.5em]
 &=\sum_{n\ge n_1\ge\cdots\ge n_{|\bm t|}\ge 1}
	 \frac{\prod_{r=1}^{d+1}(1-p)^{n_{|\bm t|_{r-1}+1}-n_{|\bm t|_r}}}{n_1\cdots n_{|\bm t|}}\cdot
	 \Bigl[
		(1-p+Ap)^{n_{|\bm t|}}
	 -(1-p)^{n_{|\bm t|}}
	 \Bigr],
\end{align*}
which agrees with \eqref{eq:main} with $\bm t=(s_1,\dots,s_d,1)$ instead of $\bm s$.
\item Finally, we show that Theorem~\ref{th:main} holds also for the $(d+1)$-tuple $\bm v=(s_1,\dots,s_d,s_{d+1})\in\N^{d+1}$ with $s_{d+1}\ge 2$. The iterative integration generates the corresponding Mneimneh-like sum according to the relation
\[
M_n^{(s_1,\dots,s_{d+1})}(a,p)
	=\idotsint\limits_{0<A_1<\cdots<A_{s_{d+1}-1}<a}
	 M_n^{(s_1,\dots,s_d,1)}(A_1,p)\cdot
	 \frac{\mathrm dA_1\cdots\mathrm dA_{s_{d+1}-1}}{A_1\cdots A_{s_{d+1}-1}}.
\]
We proceed by applying the formula for the Mneimneh-like sum $M_n^{(s_1,\dots,s_d,1)}(A,p)$ deduced at the end of the second part of this proof. We infer for $p\neq 1$ that
\begin{align*}
M_n^{(s_1,\dots,s_{d+1})}(a,p)
 &=\sum_{n\ge n_1\ge\cdots\ge n_{|\bm t|}\ge 1}
	 \frac{\prod_{r=1}^{d+1}(1-p)^{n_{|\bm t|_{r-1}+1}-n_{|\bm t|_r}}}{n_1\cdots n_{|\bm t|}}\cdot(1-p)^{n_{|\bm t|}}\times{}\\[.5em]
 &\hskip1cm{}\times
	\idotsint\limits_{0<A_1<\cdots<A_{s_{d+1}-1}<a}
	\left[
		\left(1+\frac{A_1p}{1-p}\right)^{n_{|\bm t|}}-1
	\right]
	\cdot
	\frac{\mathrm dA_1\cdots\mathrm dA_{s_{d+1}-1}}{A_1\cdots A_{s_{d+1}-1}}.
\intertext{Next, employing Lemma~\ref{lm:aux} on the last multiple integral successively, we arrive at}
M_n^{(s_1,\dots,s_{d+1})}(a,p)
 &=\sum_{n\ge n_1\ge\cdots\ge n_{|\bm t|}\ge 1}
	 \frac{\prod_{r=1}^{d+1}(1-p)^{n_{|\bm t|_{r-1}+1}-n_{|\bm t|_r}}}{n_1\cdots n_{|\bm t|}}\cdot(1-p)^{n_{|\bm t|}}\times{}\\[.5em]
 &\hskip2cm{}\times
	\sum_{n_{|\bm t|}\ge j_1\ge\cdots\ge j_{s_{d+1}-1}\ge 1}
	\frac{\left(1+\frac{ap}{1-p}\right)^{j_{s_{d+1}-1}}-1}{j_1\cdots j_{s_{d+1}-1}}
\\[2em]
 &=\sum_{n\ge n_1\ge\cdots\ge n_{|\bm v|}\ge 1}
	 \frac{\prod_{r=1}^{d+1}(1-p)^{n_{|\bm v|_{r-1}+1}-n_{|\bm v|_r}}}{n_1\cdots n_{|\bm v|}}\times\\
 &\hskip2cm{}\times
	 \Bigl[
		(1-p+pa)^{n_{|\bm v|}}
	 -(1-p)^{n_{|\bm v|}}
	 \Bigr],
\end{align*}
which agrees with \eqref{eq:main} with $\bm v=(s_1,\dots,s_d,s_{d+1})$ instead of $\bm s$.\qed
\end{enumerate}
\end{proof}

\section{Applications to multiple polylogarithms}
\label{sec:polylog}

We now approach applications of Theorem~\ref{th:main} to multiple polylogarithms. The section will be split into two subsections, allowing us to deal with two technical cases whose separation seems more convenient. We investigate the consequences of Theorem~\ref{th:main} for the tuples
$\bm s
=\left(
	\textstyle\sqcup_{i=1}^{d-1}\{m_i+2,\{1\}_{u_i}\},m_d+2
 \right)$
in the first part, whereas for the tuples
$\bm s
=\left(
	\textstyle\sqcup_{i=1}^d\{m_i+2,\{1\}_{u_i}\}
 \right)$ with $u_d\ge 1$
in the second part. Here, the parameters $m_i$, $u_i$ are assumed to be non-negative integers, and $\sqcup$ denotes the concatenation of strings.

\subsection{First theorem on polylogarithms}
\label{ssec:polylog.1}

We present our first result with multiple polylogarithms below. Notice that the right-hand side of \eqref{eq:Li.10} involves two independent parameters~$a,p$, whereas the left-hand side involves $a$ only.

\begin{theorem}
\label{th:Li.1}
Assume that $m_i,u_i\in\N_0$, $d\in\N$, and
$
\bm s
	=\left(
		\textstyle\sqcup_{i=1}^{d-1}\{m_i+2,\{1\}_{u_i}\},m_d+2
	 \right).
$
If $a,p\in\R$ with $p\neq 1$ and $|a|\le \min\bigl(1,2/p-1\bigr)$ then
\begin{align}
\notag
\Li^\star_{\bm s}
	\left(
		\{1\}_{\|\bm s\|-1},a
	\right)
 &=\Li^\star_{\{1\}_{|\bm s|}}
		\left(
			\textstyle
			\bigsqcup_{i=1}^{d-1}
			\bigl\{
				1-p,\{1\}_{m_i},\frac1{1-p},\{1\}_{u_i}
			\bigr\},
			1-p,\{1\}_{m_d},1+\tfrac{ap}{1-p}
		\right)\\[.5em]
\label{eq:Li.10}
 &\,
  -\Li^\star_{\{1\}_{|\bm s|}}
		\left(
			\textstyle
			\bigsqcup_{i=1}^{d-1}
			\bigl\{
				1-p,\{1\}_{m_i},\frac1{1-p},\{1\}_{u_i}
			\bigr\},
			1-p,\{1\}_{m_d+1}
		\right).
\end{align}
\end{theorem}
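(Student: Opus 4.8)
The plan is to specialize Theorem~\ref{th:main} to the concatenated tuple $\bm s=(\sqcup_{i=1}^{d-1}\{m_i+2,\{1\}_{u_i}\},m_d+2)$ and then send $n\to\infty$ in the transformation identity \eqref{eq:main}, exactly as \eqref{eq:gencev} was pushed to the limit to obtain Theorem~\ref{th.series}. Writing $L:=\Li^\star_{\bm s}(\{1\}_{\|\bm s\|-1},a)$ for the target left-hand side, I would establish the limit $\lim_{n\to\infty}M_n^{(\bm s)}(a,p)=L$ together with the convergence of the right-hand side of \eqref{eq:main} to the difference of multiple polylogarithms in \eqref{eq:Li.10}; since the two sides of \eqref{eq:main} coincide for every $n$, their limits coincide and yield \eqref{eq:Li.10}.

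For the first limit I would use Abel summation. Since $\zeta_0^\star(\bm s;a)=0$, I would write $M_n^{(\bm s)}(a,p)=\sum_{m=1}^n\bigl(\zeta_m^\star(\bm s;a)-\zeta_{m-1}^\star(\bm s;a)\bigr)\,S_{n,m}$ with $S_{n,m}:=\sum_{k=m}^n\binom nk p^k(1-p)^{n-k}$. Because $s_1=m_1+2\ge 2$ and $|a|\le 1$, the sequence $\zeta_k^\star(\bm s;a)$ converges to $L$, so its increments are summable. The admissible range of $p$ forces $|1-p|<1$, whence $S_{n,m}=1-\sum_{k=0}^{m-1}\binom nk p^k(1-p)^{n-k}\to 1$ for each fixed $m$ as $n\to\infty$; the telescoping then gives $M_n^{(\bm s)}(a,p)\to\lim_{k\to\infty}\zeta_k^\star(\bm s;a)=L$. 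The cases $p=0,1$ are excluded by Lemma~\ref{lm:p01} and the hypothesis $p\neq 1$.

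The identification of the second limit is the combinatorial core. Letting $n\to\infty$ relaxes the constraint $n\ge n_1$ in \eqref{eq:main} and, assuming absolute convergence, produces the infinite nested sum over $n_1\ge\cdots\ge n_{|\bm s|}\ge 1$. Here I would re-index the powers of $1-p$: a component of $\bm s$ equal to $1$ contributes a trivial factor, while a component of value $\ge 2$ contributes $(1-p)^{+1}$ at the first index of its block and $(1-p)^{-1}$ at the last, every interior index receiving exponent $0$. Reading off the resulting per-index argument turns each block $\{m_i+2,\{1\}_{u_i}\}$ into the string $1-p,\{1\}_{m_i},\frac1{1-p},\{1\}_{u_i}$ and the terminal block $m_d+2$ into $1-p,\{1\}_{m_d},\frac1{1-p}$. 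Finally the outer bracket attaches to the last index $n_{|\bm s|}$: its base $\frac1{1-p}$ combines with $(1-p+ap)^{n_{|\bm s|}}$ to give the argument $1+\frac{ap}{1-p}$, producing the first polylogarithm, and with $(1-p)^{n_{|\bm s|}}$ to give $1$, i.e.\ $\{1\}_{m_d+1}$, producing the second. This reproduces \eqref{eq:Li.10} precisely.

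The hard part is analytic: justifying that the limit passes inside the summation and that each of the two series in \eqref{eq:Li.10} converges in its own right. This is exactly where the hypothesis $|a|\le\min(1,2/p-1)$ is needed. The bound $|1-p|<1$ makes the outermost factor $(1-p)^{n_1}$ decay geometrically, while the same hypothesis controls the innermost base through $(1-p)\bigl(1+\frac{ap}{1-p}\bigr)=1-p+ap$, which satisfies $|1-p+ap|\le 1$; the interior factors $\frac1{1-p}$ are, block by block, compensated by the leading $1-p$, so that each block contributes a factor of modulus at most $1$. I expect the delicate point to be the boundary case $|1-p+ap|=1$ together with the uniform control of the intermediate partial sums, which I would settle by the same nested estimates that establish the base case Theorem~\ref{th.series} (cf.~\cite{gencev.dm}), now carried through the concatenated argument string.
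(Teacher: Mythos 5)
Your overall architecture matches the paper's: specialize Theorem~\ref{th:main} to this $\bm s$, identify the $n\to\infty$ limit of the right-hand side of \eqref{eq:main} by exactly the same re-indexing of the powers of $1-p$ (block $\{m_i+2,\{1\}_{u_i}\}$ giving $1-p,\{1\}_{m_i},\frac1{1-p},\{1\}_{u_i}$, terminal bracket splitting into $1+\frac{ap}{1-p}$ and $\{1\}_{m_d+1}$), and reduce everything to $\lim_{n\to\infty}M_n^{(\bm s)}(a,p)=\Li^\star_{\bm s}(\{1\}_{\|\bm s\|-1},a)$. But your Abel-summation proof of that last limit has a genuine gap on part of the claimed parameter range. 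The hypothesis $|a|\le\min(1,2/p-1)$ permits $p\in(1,2)$ whenever $|a|<1$ (e.g.\ $p=3/2$, $a=1/4$), and there $1-p<0$: the weights $S_{n,m}=\sum_{k=m}^n\binom nk p^k(1-p)^{n-k}$ are signed and \emph{not} uniformly bounded, since $S_{n,n}=p^n\to\infty$. Pointwise convergence $S_{n,m}\to1$ together with absolute summability of the increments $\Delta_m=\zeta^\star_m(\bm s;a)-\zeta^\star_{m-1}(\bm s;a)$ does not then justify $\sum_m\Delta_m S_{n,m}\to\sum_m\Delta_m$; indeed for $a>0$ the single term $\Delta_n S_{n,n}\ge a\,p^n/n^{s_1}$ diverges, so the limit can only come from cancellations your argument never controls. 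A further warning sign: your argument for the left-hand limit uses only $|a|\le1$ and $|1-p|<1$ and never couples $a$ with $p$; if it were valid it would apply at $p=3/2$, $a=-1$, where $|1-p+ap|=2>1$ and the first polylogarithm in \eqref{eq:Li.10} diverges. (For $p\in(0,1]$ your telescoping is fine, since there $0\le S_{n,m}\le 1$.)

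The paper closes exactly this hole with a small but crucial manipulation before invoking the Toeplitz theorem: it writes $\Li^\star_{\bm s}(\{1\}_{\|\bm s\|-1},a)-M_n^{(\bm s)}(a,p)=\sum_{k=0}^n\binom nk p^k(1-p)^{n-k}\cdot\left(\Li^\star_{\bm s}(\{1\}_{\|\bm s\|-1},a)-\zeta^\star_k(\bm s;a)\right)$, represents the tail as $\sum_{n_1\ge\cdots\ge n_{\|\bm s\|}>k}a^{n_{\|\bm s\|}}/\prod_i n_i^{s_i}$, and shifts $n_i\mapsto n_i+k$; since $n_{\|\bm s\|}>k$, this extracts a factor $a^k$ and produces Toeplitz weights $a_{nk}=\binom nk(ap)^k(1-p)^{n-k}$ with row sums $\sum_{k=0}^n|a_{nk}|=(|ap|+|1-p|)^n$. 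For $0<p<2$ the hypothesis $|a|\le\min(1,2/p-1)$ is precisely equivalent to $|ap|+|1-p|\le1$, so both Toeplitz conditions hold and the limit follows on the full range. The missing idea in your proposal, then, is to let the decay $a^{n_{\|\bm s\|}}$ in the tail absorb the oscillation of the binomial weights when $p>1$; your Abel decomposition separates $a$ from the weights and thereby loses exactly the control that the coupled hypothesis provides. Your treatment of the right-hand side's convergence is otherwise in the same spirit as the paper's (which itself simply assumes convergence of the two polylogarithms at that stage), so repairing the left-hand limit as above would complete the argument.
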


Consequences of this theorem are given later within this section, see Corollaries~\ref{cr:Li.1},~\ref{cr:Li.11} and Example~\ref{ex:Li.1}. Let us now approach the proof of Theorem~\ref{th:Li.1}, based on the Toeplitz theorem. To make the paper reading comfortable, we include this crucial result below.

\begin{theorem}[Toeplitz Theorem, {Knopp~\cite[p.~74]{knopp}}]
\label{th.toeplitz}
Assume that \smash{$\lim\limits_{n\to\infty}x_n=0$} and the terms~$a_{nk}$
form a~triangular system
\[
\left\{
	\begin{matrix}[ccccc]
	a_{00}\\
	a_{10}& a_{11}\\
	\vdots& \vdots& \ddots\\
	a_{n0}& a_{n1}& \cdots & a_{nn}\\
	\vdots& \vdots& \vdots& \vdots& \ddots\\
	\end{matrix}
\right.
\]
and satisfy the following conditions:
\begin{enumerate}[label={\textnormal{(\alph*)}}]
\item
	every column of the system contains a null sequence, i.e.~for fixed $k\ge 0$,
	$\smash{\lim\limits_{n\to\infty}}a_{nk}=0$,
\item
	there exists a constant $K$ independent of $n$ such that
	$
	\sum_{k=0}^n |a_{nk}|<K.
	$
\end{enumerate}
Then
\begin{equation}
\label{eq:toplitz}
\lim_{n\to\infty}\sum_{k=0}^na_{nk}\cdot x_k=0.
\end{equation}
\end{theorem}

Concerning our later technical needs, we append the following practical remark relating to the Toeplitz theorem.

\begin{remark}
\label{rm:toeplitz}%
Assume that $\{x_k'\}$ is a convergent real sequence with $x_k'\to\xi$ for $k\to\infty$. Further, set $a_{nk}=1/(n+1)$ for every $k=0,\dots,n$ and $x_k:=x_k'-\xi$ in the Toeplitz theorem. Then all its conditions are satisfied and, consequently, the arithmetic mean $1/(n+1)\cdot\sum_{k=0}^n(x'_k-\xi)$ converges to $0$ for $n\to\infty$. From this, we immediately deduce the well-known fact that $1/(n+1)\cdot\sum_{k=0}^nx'_k\to\xi$.
\end{remark}

\begin{proof}[of Theorem~\ref{th:Li.1}]
Set
\[\bm s
=\left(
	\textstyle\bigsqcup_{i=1}^{d-1}\{m_i+2,\{1\}_{u_i}\},m_d+2
 \right)
=(s_1,\dots,s_{\|\bm s\|})\]
in Theorem~\ref{th:main}. Notice that the product factor $(1-p)^{n_{|\bm s|_{r-1}+1}-n_{|\bm s|_r}}$ involved in~\eqref{eq:main} is equal to~$1$ for $|\bm s|_{r-1}+1=|\bm s|_r$ iff $s_r=1$ for some $r=1,\dots,\|\bm s\|$. These cases are related to the groups of $1$'s in $\bm s$ (if any), i.e.~to the elements $\{1\}_{u_i}$ with $u_i\in\N_0$. Hence, every $1$ from the group $\{1\}_{u_i}$ contributes to the product $\prod_{r=1}^{\|\bm s\|}(1-p)^{n_{|\bm s|_{r-1}+1}-n_{|\bm s|_r}}$ with exactly one factor equal to~$1$. These trivial factors are formally included as~$1^{u_i}$ in the following calculation. Denoting $\bm m=(m_1,\dots,m_d)$ and $\bm u=(u_1,\dots,u_{d-1})$, we can write for our choice of $\bm s$ (declared at the beginning of this proof) that
\begin{align*}
\prod_{r=1}^{\|\bm s\|}(1-p)^{n_{|\bm s|_{r-1}+1}-n_{|\bm s|_r}}
 &=\quad(1-p)^{n_1}\cdot
	 \bigl(\tfrac1{1-p}\bigr)^{n_{m_1+2}}\cdot
	 1^{u_1}\times{}\\[.5em]
 &\quad{}\times
	 (1-p)^{n_{m_1+u_1+3}}\cdot
	 \bigl(\tfrac1{1-p}\bigr)^{n_{m_1+m_2+u_1+4}}\cdot
	 1^{u_2}\times{}\\
 &\quad\cdots\\
 &\quad{}\times
	 (1-p)^{n_{|\bm m|_{d-2}+|\bm u|_{d-2}+2d-3}}\cdot
	 \bigl(\tfrac1{1-p}\bigr)^{n_{|\bm m|_{d-1}+|\bm u|_{d-2}+2d-2}}\cdot
	 1^{u_{d-1}}\times{}\\[.5em]
 &\quad{}\times
	 (1-p)^{n_{|\bm m|_{d-1}+|\bm u|_{d-1}+2d-1}}\cdot
	 \bigl(\tfrac1{1-p}\bigr)^{n_{|\bm m|_d+|\bm u|_{d-1}+2d}}\\[.5em]
 &=\prod_{r=1}^d
		\left[
			(1-p)^{n_{|\bm m|_{r-1}+|\bm u|_{r-1}+2r-1}}\cdot\bigl(\tfrac1{1-p}\bigr)^{n_{|\bm m|_r+|\bm u|_{r-1}+2r}}
		\right],
\end{align*}
where we intentionally omitted the trivial factors $1^{u_i}$ in the last step. The building mechanism for the indices $n_{\alpha}$ in the above calculation is demonstrated for $\bm s=(4,1,1,3,2)$ in Figure~\ref{fg:example} to enable more clarity.
\begin{figure}[h]
\begin{center}
\begin{tikzpicture}[font=\scriptsize,remember picture,x=0.9cm,y=0.9cm]
\draw[-stealth]
	(0.5,0) -- (13,0)
		node[below] {$\alpha$};
\node[font=\normalsize] (formula) at (7,-3)
	{$\displaystyle
		(1-p)^{n_1}\cdot
		\bigl(\tfrac1{1-p}\bigr)^{n_4}\cdot
		(1-p)^{n_7}\cdot
		\bigl(\tfrac1{1-p}\bigr)^{n_9}\cdot
		(1-p)^{n_{10}}\cdot
		\bigl(\tfrac1{1-p}\bigr)^{n_{11}}
		$};
\foreach\n in {1,...,10}
	\draw[thin]
		(\n,-.05) -- (\n,.05)
			node[below,yshift=-5pt] {$\n$};
\draw[thin]
	(11,-.05) -- (11,.05)
		node[below,yshift=-5pt] {$11\rlap{${}=|\bm s|$}$};
\draw[green!70!black]
	(1,.8)
		coordinate (s1-A)
		-- ++(3,0)
		node[above,midway] {$s_1=4$}
		node[blue,above,midway,yshift=1.5em] {$m_1=2$}
		coordinate (s1-B)
	(5,.8)
		coordinate (s2-AB)
		node[above] {$s_2=1$}
		node[brown,above,yshift=1.5em,xshift=.5cm] {$u_1=2$}
	(6,.8)
		coordinate (s3-AB)
		node[above] {$s_3=1$}
	(7,.8)
		coordinate (s4-A)
		-- ++(2,0)
		node[above,midway] {$s_4=3$}
		node[blue,above,midway,yshift=1.5em] {$m_2=1$}
		coordinate (s4-B)
	(10,.8)
		coordinate (s5-A)
		-- ++(1,0)
		coordinate (s5-B)
		node[above,midway] {$s_5=2$}
		node[blue,above,midway,yshift=1.5em] {$m_3=0$}
	(9.3,.8)
		coordinate (s6-AB)
		node[brown,above,yshift=1.5em] {$u_2=0$}
	;
\foreach\n in {1,4,5,6,7,9,10,11}
	\draw[help lines,densely dotted,thick] (\n,0) -- ++(0,.8);
\fill[green!70!black]
	(s1-A) circle (1.5pt)
	(s1-B) circle (1.5pt)
	(s2-AB) circle (1.5pt)
	(s3-AB) circle (1.5pt)
	(s4-A) circle (1.5pt)
	(s4-B) circle (1.5pt)
	(s5-A) circle (1.5pt)
	(s5-B) circle (1.5pt)
	;
\draw[densely dotted,help lines,thick]
	($(formula.177)+(0,.2)$) coordinate (A) -- ($(s1-A)-(0,1.5)$) coordinate (XA)
	($(A)+(2.9,0)$) coordinate (B) -- ($(s1-B)-(0,1.5)$) coordinate (XB)
	($(A)+(3.2,0)$) coordinate (C) -- ($(s4-A)-(0,1.5)$) coordinate (XC)
	($(A)+(6,0)$) coordinate (D) -- ($(s4-B)-(0,1.5)$) coordinate (XD)
	($(A)+(6.3,0)$) coordinate (E) -- ($(s5-A)-(0,1.5)$) coordinate (XE)
	($(A)+(9,0)$) coordinate (F) -- ($(s5-B)-(0,1.5)$) coordinate (XF);
\fill[gray!10]
 	(A) -- (XA) -- (XB) -- (B)
 	(C) -- (XC) -- (XD) -- (D)
 	(E) -- (XE) -- (XF) -- (F);
\draw[draw=gray]
	($(current bounding box.north west)+(-.3,.3)$)
		rectangle
	($(current bounding box.south east)+(.3,-.3)$);
\end{tikzpicture}
\end{center}
\caption{The case $\bm s=(4,1,1,3,2)$}
\label{fg:example}
\end{figure}

By the above facts and by Theorem~\ref{th:main}, we therefore obtain
\begin{align}
\notag
\lim_{n\to\infty}M_n^{(\bm s)}(a,p)
 &=\sum_{n_1\ge\cdots\ge n_{|\bm s|}\ge 1}
	 \frac{%
	 	\prod_{r=1}^{\|\bm s\|}
	 	(1-p)^{n_{|\bm s|_{r-1}+1}-n_{|\bm s|_r}}}%
	 {n_1\cdots n_{|\bm s|}}\cdot
	 \Bigl[
		(1-p+ap)^{n_{|\bm s|}}
	 -(1-p)^{n_{|\bm s|}}
	 \Bigr]
	 \\[1em]
\notag
 &=\sum_{n_1\ge\cdots\ge n_{|\bm s|}\ge 1}
	 \frac{%
	 	\prod_{r=1}^d
		\left[
			(1-p)^{n_{|\bm m|_{r-1}+|\bm u|_{r-1}+2r-1}}\cdot\bigl(\frac1{1-p}\bigr)^{n_{|\bm m|_r+|\bm u|_{r-1}+2r}}
		\right]
			}{n_1\cdots n_{|\bm s|}}\times\\
\notag
 &{}\hskip3cm
	\times
	 \Bigl[
		(1-p+ap)^{n_{|\bm s|}}
	 -(1-p)^{n_{|\bm s|}}
	 \Bigr]
	 \\[1em]
\notag
 &=\sum_{n_1\ge\cdots\ge n_{|\bm s|}\ge 1}
	 \frac{%
	 	\prod_{r=1}^{d-1}
		\left[
			(1-p)^{n_{|\bm m|_{r-1}+|\bm u|_{r-1}+2r-1}}\cdot\bigl(\frac1{1-p}\bigr)^{n_{|\bm m|_r+|\bm u|_{r-1}+2r}}
		\right]
		}{n_1\cdots n_{|\bm s|}}\times
	\\
\notag
 &{}\hskip3cm\times(1-p)^{n_{|\bm m|_{d-1}+|\bm u|_{d-1}+2d-1}}\cdot\bigl(\tfrac1{1-p}\bigr)^{n_{|\bm m|_d+|\bm u|_{d-1}+2d}}\times\\
\notag
 &{}\hskip3cm
	\times
 \Bigl[
	(1-p+ap)^{n_{|\bm s|}}
 -(1-p)^{n_{|\bm s|}}
 \Bigr]
	\\[1em]
\notag
 &=\sum_{n_1\ge\cdots\ge n_{|\bm s|}\ge 1}
	 \frac{%
	 	\prod_{r=1}^{d-1}
		\left[
			(1-p)^{n_{|\bm m|_{r-1}+|\bm u|_{r-1}+2r-1}}\cdot\bigl(\frac1{1-p}\bigr)^{n_{|\bm m|_r+|\bm u|_{r-1}+2r}}
		\right]
		}{n_1\cdots n_{|\bm s|}}\times
	\\
\notag
 &{}\hskip3cm\times(1-p)^{n_{|\bm m|_{d-1}+|\bm u|_{d-1}+2d-1}}\cdot
 	\Bigl[
	\bigl(1+\tfrac{ap}{1-p}\bigr)^{n_{|\bm s|}}
 -1\Bigr]
	\\[1em]
\notag
 &=\Li^\star_{\{1\}_{|\bm s|}}
		\left(
			\textstyle
			\bigsqcup_{i=1}^{d-1}
			\bigl\{
				1-p,\{1\}_{m_i},\frac1{1-p},\{1\}_{u_i}
			\bigr\},
			1-p,\{1\}_{m_d},1+\tfrac{ap}{1-p}
		\right)
	\\[.3em]
\label{eq:diff.polylog}
 &-\Li^\star_{\{1\}_{|\bm s|}}
		\left(
			\textstyle
			\bigsqcup_{i=1}^{d-1}
			\bigl\{
				1-p,\{1\}_{m_i},\frac1{1-p},\{1\}_{u_i}
			\bigr\},
			1-p,\{1\}_{m_d+1}
		\right),
\end{align}
where we assumed the convergence of both multiple polylogarithms and used the fact that $|\bm m|_d+|\bm u|_{d-1}+2d=|\bm m|+|\bm u|+2d=|\bm s|$.

Based on the obtained form of $M_n^{(\bm s)}(a,p)$ for $n\to\infty$ in \eqref{eq:diff.polylog}, we see that for proving Theorem~\ref{th:Li.1}, it suffices to show that
\begin{equation}
\label{eq:zero}
\lim_{n\to\infty}
	\Bigl(
		\Li^\star_{\bm s}
		\left(
			\{1\}_{\|\bm s\|-1},a
		\right)
	 -M_n^{(\bm s)}(a,p)
	\Bigr)=0
\end{equation}
for suitable values of the involved parameters $a,p$. We employ the Toeplitz theorem to prove~\eqref{eq:zero}. Temporarily, we consider the involved parameters $a,p$ as free, and the conditions put on them will be deduced during the rest of this proof.

Clearly, for $a=0$, Theorem~\ref{th:Li.1} is true. Therefore, we consider the case $a\neq 0$ from now on. In addition to this, to ensure the convergence of $\Li^\star_{\bm s}\left(\{1\}_{\|\bm s\|-1},a\right)$, it is necessary to assume that~$|a|\le 1$. Recall that $a=1$ is admissible with $s_1>1$ only, which is satisfied concerning the form of $\bm s$ in this proof. Further, we proceed as follows:
\begin{align*}
\Li^\star_{\bm s}
	\left(
	 \{1\}_{\|\bm s\|-1},a
 \right)
 &-M_n^{(\bm s)}(a,p)\\[.5em]
 &=\sum_{k=0}^n\binom nk\cdot p^k\cdot (1-p)^{n-k}\cdot
	 \Bigl(
		\Li^\star_{\bm s}\left(\{1\}_{\|\bm s\|-1},a\right)
	 -\zeta_k^\star(\bm s;a)
	 \Bigr)\\[.5em]
 &=\sum_{k=0}^n\binom nk\cdot p^k\cdot (1-p)^{n-k}\cdot
	 \sum_{n_1\ge\cdots\ge n_{\|\bm s\|}>k}
	 \frac{a^{n_{\|\bm s\|}}}{\prod_{i=1}^{\|\bm s\|}n_i^{s_i}}\\[.5em]
 &=\sum_{k=0}^n\binom nk\cdot (ap)^k\cdot (1-p)^{n-k}\cdot
	 \sum_{n_1\ge\cdots\ge n_{\|\bm s\|}\ge 1}
	 \frac{a^{n_{\|\bm s\|}}}{\prod_{i=1}^{\|\bm s\|}(n_i+k)^{s_i}}.
\end{align*}
Now, set for every $k,n\in\N$
\begin{align*}
a_{nk}
	&=\binom nk\cdot (ap)^k\cdot (1-p)^{n-k},\\[.5em]
x_k
 &=\sum_{n_1\ge\cdots\ge n_{\|\bm s\|}\ge 1}
	 \frac{a^{n_{\|\bm s\|}}}{\prod_{i=1}^{\|\bm s\|}(n_i+k)^{s_i}}.
\end{align*}

For the proof of \eqref{eq:zero}, it suffices to show that $\sum_{k=0}^na_{nk}\cdot x_k\to 0$ for our choice of $a_{nk}$ and~$x_k$ which can be achieved by the Toeplitz theorem. Let us verify its conditions. To fulfill condition~(a), it is necessary that for fixed $k\in\mathbb N$ we have $\lim_{n\to\infty}a_{nk}=0$ which is met whenever $|1-p|<1$. To fulfill condition (b), the sum $\sum_{k=0}^n|a_{nk}|$ must be bounded. Since
$
\sum_{k=0}^n|a_{nk}|
	=\sum_{k=0}^n\binom nk\cdot\left|(ap)^k\cdot(1-p)^{n-k}\right|
	=\left(|ap|+|1-p|\right)^n,
$
we set $|ap|+|1-p|\le 1$. Consequently, the intersection of the conditions $|1-p|<1$ and $|ap|+|1-p|\le 1$ implies $|a|\le\min\bigl(1,2/p-1\bigr)$ concluding the proof of Theorem~\ref{th:Li.1}.
\qed
\end{proof}

Theorem~\ref{th:Li.1} implies the following identity relating arbitrary multiple zeta-star value $\zeta^\star(\bm s)$ to a parametric difference of specific multiple polylogarithms.

\begin{corollary}
\label{cr:Li.1}
Assume that $m_i,u_i\in\N_0$, $d\in\N$, and
$
\bm s
	=\left(
		\textstyle\sqcup_{i=1}^{d-1}\{m_i+2,\{1\}_{u_i}\},m_d+2
	 \right).
$
If $p\in (0,1)$ then
\begin{align}
\notag
\zeta^\star(\bm s)
 &=\Li^\star_{\{1\}_{|\bm s|}}
		\left(
			\textstyle
			\bigsqcup_{i=1}^{d-1}
			\bigl\{
				1-p,\{1\}_{m_i},\frac1{1-p},\{1\}_{u_i}
			\bigr\},
			1-p,\{1\}_{m_d},\tfrac1{1-p}
		\right)\\[.5em]
\label{eq:Li.1}
 &\,-\Li^\star_{\{1\}_{|\bm s|}}
		\left(
			\textstyle
			\bigsqcup_{i=1}^{d-1}
			\bigl\{
				1-p,\{1\}_{m_i},\frac1{1-p},\{1\}_{u_i}
			\bigr\},
			1-p,\{1\}_{m_d+1}
		\right).
\end{align}
\end{corollary}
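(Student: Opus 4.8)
The plan is to obtain Corollary~\ref{cr:Li.1} as the single specialization $a=1$ of Theorem~\ref{th:Li.1}; no new machinery is needed. First I would verify that $a=1$ is an admissible value of the parameter. The hypothesis of Theorem~\ref{th:Li.1} requires $|a|\le\min\bigl(1,2/p-1\bigr)$, and for $p\in(0,1)$ one has $2/p-1>1$, so the minimum equals $1$ and the admissibility condition collapses to $|a|\le 1$. Hence $a=1$ lies exactly on the boundary of the permitted range and is allowed. The convergence of the multiple zeta-star value on the left is guaranteed because the first component of $\bm s$ is $s_1=m_1+2\ge 2>1$, which is the standard convergence criterion recalled in the opening remark.

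Next I would substitute $a=1$ into both sides of~\eqref{eq:Li.10}. On the left, every argument of $\Li^\star_{\bm s}\bigl(\{1\}_{\|\bm s\|-1},a\bigr)$ becomes $1$, so by the very definition of $\zeta^\star(\bm s)$ the left-hand side equals $\Li^\star_{\bm s}\bigl(\{1\}_{\|\bm s\|}\bigr)=\zeta^\star(\bm s)$. On the right, the only argument that depends on $a$ is the last entry $1+\tfrac{ap}{1-p}$ of the first polylogarithm; putting $a=1$ gives $1+\tfrac{p}{1-p}=\tfrac1{1-p}$. The second polylogarithm on the right of~\eqref{eq:Li.10} contains no occurrence of $a$ and is therefore carried over verbatim.

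Collecting these three substitutions turns~\eqref{eq:Li.10} into exactly~\eqref{eq:Li.1}, which completes the argument. I anticipate essentially no obstacle: the corollary is a direct boundary specialization of the theorem. The only delicate point, which I have already addressed, is confirming that the boundary value $a=1$ genuinely satisfies the (non-strict) inequality in the hypothesis of Theorem~\ref{th:Li.1} for every $p\in(0,1)$; once that is settled the identity follows by inspection.
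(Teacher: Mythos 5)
Your proposal is correct and matches the paper's own proof: Corollary~\ref{cr:Li.1} is obtained there precisely by setting $a=1$ in Theorem~\ref{th:Li.1}, with the condition $p\in(0,1)$ coming from solving $|a|\le\min\bigl(1,2/p-1\bigr)$ at $a=1$. Your additional checks --- that $1+\tfrac{p}{1-p}=\tfrac1{1-p}$, that the second polylogarithm is unchanged, and that $s_1=m_1+2\ge 2$ ensures convergence of $\zeta^\star(\bm s)$ --- are exactly the routine verifications the paper leaves implicit.
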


\begin{proof}
The statement follows immediately from setting $a=1$ in Theorem~\ref{th:Li.1}. The condition $p\in(0,1)$ can be deduced from solving the condition $|a|\le\min(1,2/p-1)$ with $a=1$.
\qed
\end{proof}

Setting $m_i=u_i=0$ for all $i=1,\dots,d$ in Corollary~\ref{cr:Li.1}, we easily deduce the following relation.

\begin{example}
\label{ex:Li.1}
Assume that $p\in (0,1)$. Then
\[
\zeta^\star(\{2\}_d)
	=\Li^\star_{\{1\}_{2d}}
		\left(
			\textstyle
			\bigl\{
				1-p,\frac1{1-p}
			\bigr\}_d
		\right)\\[.5em]
  -\Li^\star_{\{1\}_{2d}}
		\left(
			\textstyle
			\bigl\{
				1-p,\frac1{1-p}
			\bigr\}_{d-1},
			1-p,1
		\right).
\]
\end{example}

\begin{corollary}
\label{cr:Li.11}
Assume that $m_i,u_i\in\N_0$, $d\in\N$, and
$
\bm s
	=\left(
		\textstyle\sqcup_{i=1}^{d-1}\{m_i+2,\{1\}_{u_i}\},m_d+2
	 \right).
$
If $(a,p)\in[-1,1/3]\times[ 1/2,3/2]$ with $p\neq 1$ and $|a|\le\min(1,2/p-1)$ then
\begin{align}
\label{eq:Li.111}
&\Li^\star_{\{1\}_{|\bm s|}}
		\left(
			\textstyle
			\bigsqcup_{i=1}^{d-1}
			\bigl\{
				1-p,\{1\}_{m_i},\frac1{1-p},\{1\}_{u_i}
			\bigr\},
			1-p,\{1\}_{m_d+1}
		\right)
  =-\Li^\star_{\bm s}
		\left(
			\{1\}_{\|\bm s\|-1},1-\tfrac1p
		\right),\\[.5em]
\label{eq:Li.112}
&\Li^\star_{\{1\}_{|\bm s|}}
		\left(
			\textstyle
			\bigsqcup_{i=1}^{d-1}
			\bigl\{
				1-p,\{1\}_{m_i},\frac1{1-p},\{1\}_{u_i}
			\bigr\},
			1-p,\{1\}_{m_d},
			1+\frac{ap}{1-p}
		\right)\\[.5em]
\notag
 &\hskip5cm
 	=\Li^\star_{\bm s}
	 \left(
		\{1\}_{\|\bm s\|-1},a
	 \right)
	-\Li^\star_{\bm s}
	 \left(
		\{1\}_{\|\bm s\|-1},1-\tfrac1p
	 \right).
\end{align}
\end{corollary}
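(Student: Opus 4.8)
The plan is to derive both identities directly from Theorem~\ref{th:Li.1}, exploiting the crucial observation that the second multiple polylogarithm on the right-hand side of~\eqref{eq:Li.10} does not depend on the parameter $a$. Since $a$ is free within the admissibility range, I can specialize it to a value that collapses the first polylogarithm, and the second one then survives unchanged.

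First I would prove~\eqref{eq:Li.111}. The last argument of the first polylogarithm in~\eqref{eq:Li.10} is $1+\tfrac{ap}{1-p}$, and choosing $a=1-\tfrac1p$ makes this argument vanish. By the defining series~\eqref{eq:polylog.def}, whenever the last argument $x_{|\bm s|}$ equals $0$ the factor $x_{|\bm s|}^{n_{|\bm s|}}=0^{n_{|\bm s|}}$ is zero for every $n_{|\bm s|}\ge 1$, so the entire first polylogarithm is zero. Substituting $a=1-\tfrac1p$ into~\eqref{eq:Li.10} then leaves $\Li^\star_{\bm s}(\{1\}_{\|\bm s\|-1},1-\tfrac1p)$ equal to the negative of the (unchanged) second polylogarithm, which is precisely~\eqref{eq:Li.111}.

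The one point requiring care is that the specialized value $a=1-\tfrac1p$ must itself satisfy the admissibility hypothesis $|a|\le\min(1,2/p-1)$ of Theorem~\ref{th:Li.1}; this is exactly where the restriction $p\in[1/2,3/2]$ enters. I would verify $|1-\tfrac1p|\le\min(1,2/p-1)$ by splitting the range. For $p\in[1/2,1)$ one has $|1-\tfrac1p|=\tfrac1p-1$ and $\min(1,2/p-1)=1$, so the inequality reduces to $p\ge\tfrac12$; for $p\in(1,3/2]$ one has $|1-\tfrac1p|=1-\tfrac1p$ and $\min(1,2/p-1)=2/p-1$, so it reduces to $p\le\tfrac32$. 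Both hold on the stated interval (with equality at the endpoints), so the specialization is legitimate, and this computation simultaneously explains the appearance of $[1/2,3/2]$ in the hypotheses.

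Finally,~\eqref{eq:Li.112} follows by combining the two facts already in hand. Rearranging~\eqref{eq:Li.10} for a general admissible $a$ expresses the first polylogarithm as $\Li^\star_{\bm s}(\{1\}_{\|\bm s\|-1},a)$ plus the second polylogarithm, and substituting~\eqref{eq:Li.111} for the latter yields~\eqref{eq:Li.112} at once. I expect the admissibility check for $a=1-\tfrac1p$ to be the main and essentially the only obstacle, since everything else is a direct specialization and recombination of Theorem~\ref{th:Li.1}.
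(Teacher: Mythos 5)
Your proposal is correct and follows essentially the same route as the paper's own proof: specialize $a=1-\tfrac1p$ in Theorem~\ref{th:Li.1} so that the last argument $1+\tfrac{ap}{1-p}$ vanishes and kills the first polylogarithm (yielding \eqref{eq:Li.111}, with the admissibility of $a=1-\tfrac1p$ forcing $p\in[1/2,3/2]$), then substitute \eqref{eq:Li.111} back into \eqref{eq:Li.10} to obtain \eqref{eq:Li.112}. Your explicit case-split verification of $|1-\tfrac1p|\le\min(1,2/p-1)$ is in fact more detailed than the paper's one-line remark on the same point.
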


The importance of this statement consists in the essential reduction of computational complexity of the multiple polylogarithms in \eqref{eq:Li.111} and~\eqref{eq:Li.112}. While the depth of the polylogarithms on the left-hand sides is $|\bm s|$, the polylogarithms on the right-hand sides are of depth~$\|\bm s\|$, which is always less than $|\bm s|$ for arbitrary~$\bm s$ considered in this section. For instance, with $\bm s=(23,4)$ we have $|\bm s|=27$, whereas $\|\bm s\|=2$. Hence, the reduction effect is essential in certain instances.

\begin{proof}[of Corollary~\ref{cr:Li.11}]
For the proof of the first identity, set $a=1-1/p$ in~\eqref{eq:Li.10} and remember that
\begin{equation}
\label{eq:result}
\Li^\star_{\{1\}_{|\bm s|}}
		\left(
			\textstyle
			\bigsqcup_{i=1}^{d-1}
			\bigl\{
				1-p,\{1\}_{m_i},\frac1{1-p},\{1\}_{u_i}
			\bigr\},
			1-p,\{1\}_{m_d},0
		\right)=0
\end{equation}
due to the last zero argument. Now, \eqref{eq:Li.111} follows by applying~\eqref{eq:result} to~\eqref{eq:Li.10} with $a=1-1/p$. Of course, the above manipulations make sense only for $a,p\in\R$ with $|a|\le\min (1,2/p-1)$ satisfying $a=1-1/p$. Solving this system of conditions with respect to $p$ implies $p\in[1/2,3/2]$.

For the proof of \eqref{eq:Li.112}, apply identity~\eqref{eq:Li.111} with $p\in[ 1/2,3/2]$ to~\eqref{eq:Li.10} valid for $a,p\in\R$ with $|a|\le\min (1,2/p-1)$. Solving the intersection of these two condition groups implies $a\in[-1,1/3]$, and the proof follows.
\qed
\end{proof}

\subsection{Second theorem on polylogarithms}
\label{ssec:polylog.2}

This section is devoted to our second theorem on polylogarithms relating to the choice
$\bm s
=\left(
	\textstyle\sqcup_{i=1}^d\{m_i+2,\{1\}_{u_i}\}
 \right)$, $m_i,u_i\in\N_0$ with $u_d\ge 1$.
Even if the form of both tuples $\bm s$ in Sect.~\ref{ssec:polylog.1} and \ref{ssec:polylog.2} are of the same form up to the ending group of ones, the corresponding results cannot be put into one theorem easily.

\begin{theorem}
\label{th:Li.2}
Assume that $m_i,u_i\in\N_0$, $d,u_d\in\N$, and
$
\bm s
	=\left(
		\sqcup_{i=1}^d\{m_i+2,\{1\}_{u_i}\}
	 \right)
$. If $p\neq 1$ and $|a|\le \min(1,2/p-1)$ then
\begin{align}
\notag
\Li^\star_{\bm s}
 	\left(
		\{1\}_{\|\bm s\|-1},a
	\right)
\notag
 &=\Li^\star_{\{1\}_{|\bm s|}}
		\left(
			\textstyle
			\bigsqcup_{i=1}^d
			\bigl\{
				1-p,\{1\}_{m_i},\frac1{1-p},\{1\}_{u_i-\delta_{i,d}}
			\bigr\},
			1-p+ap
		\right)
	\\[.5em]
\label{eq:Li.2}
 &{}\,-\Li^\star_{\{1\}_{|\bm s|}}
		\left(
			\textstyle
			\bigsqcup_{i=1}^{d-1}
			\bigl\{
				1-p,\{1\}_{m_i},\frac1{1-p},\{1\}_{u_i-\delta_{i,d}}
			\bigr\},
			1-p
		\right),
\end{align}
where $\delta_{i,j}$ denotes the Kronecker delta.
\end{theorem}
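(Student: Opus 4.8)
The plan is to follow the proof of Theorem~\ref{th:Li.1} step for step, the only new ingredient being the terminal group $\{1\}_{u_d}$ with $u_d\ge1$. First I would substitute $\bm s=\left(\sqcup_{i=1}^d\{m_i+2,\{1\}_{u_i}\}\right)$ into Theorem~\ref{th:main} and evaluate the product $\prod_{r=1}^{\|\bm s\|}(1-p)^{n_{|\bm s|_{r-1}+1}-n_{|\bm s|_r}}$ appearing in~\eqref{eq:main}. Exactly as before, each entry equal to $1$ (every member of a group $\{1\}_{u_i}$) contributes a trivial factor, while each entry $m_r+2$ contributes $(1-p)^{n_{\alpha_r}}\bigl(\tfrac1{1-p}\bigr)^{n_{\beta_r}}$, where $\alpha_r=|\bm m|_{r-1}+|\bm u|_{r-1}+2r-1$ and $\beta_r=|\bm m|_r+|\bm u|_{r-1}+2r$ are the first and last indices of its span. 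This gives $\prod_{r=1}^d\bigl[(1-p)^{n_{\alpha_r}}(\tfrac1{1-p})^{n_{\beta_r}}\bigr]$, fully parallel to the corresponding display in the proof of Theorem~\ref{th:Li.1}.

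The decisive difference is the placement of the bracket $(1-p+ap)^{n_{|\bm s|}}-(1-p)^{n_{|\bm s|}}$ coming from~\eqref{eq:main}. In Theorem~\ref{th:Li.1} the last entry of $\bm s$ was an $m_d+2$, so its factor $(\tfrac1{1-p})^{n_{|\bm s|}}$ merged with this bracket to form $(1+\tfrac{ap}{1-p})^{n_{|\bm s|}}-1$. Here $u_d\ge1$, so the last entry is a $1$ and one finds $\beta_d=|\bm m|_d+|\bm u|_{d-1}+2d=|\bm s|-u_d<|\bm s|$. Hence $(\tfrac1{1-p})^{n_{\beta_d}}$ sits on the earlier index $\beta_d$ and does \emph{not} combine with the bracket, which still acts on $n_{|\bm s|}$. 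Reading the arguments off index by index, the index $\beta_d$ receives $\tfrac1{1-p}$, the ensuing $u_d-1$ indices receive $1$, and the final index $n_{|\bm s|}$ receives $1-p+ap$ in the first polylogarithm and $1-p$ in the second. Passing to the limit $n\to\infty$ then produces the two multiple polylogarithms of~\eqref{eq:Li.2}, the Kronecker delta $\delta_{i,d}$ recording exactly that one $1$ of the group $\{1\}_{u_d}$ has been spent to supply this distinguished final argument.

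The interchange of limit and summation, $\lim_{n\to\infty}\bigl(\Li^\star_{\bm s}(\{1\}_{\|\bm s\|-1},a)-M_n^{(\bm s)}(a,p)\bigr)=0$, is then carried out verbatim as in Theorem~\ref{th:Li.1}. I would rewrite this difference as $\sum_{k=0}^n\binom nk(ap)^k(1-p)^{n-k}\,x_k$ with $x_k=\sum_{n_1\ge\cdots\ge n_{\|\bm s\|}\ge1}a^{n_{\|\bm s\|}}\big/\prod_i(n_i+k)^{s_i}$ and apply the Toeplitz theorem (Theorem~\ref{th.toeplitz}) to $a_{nk}=\binom nk(ap)^k(1-p)^{n-k}$. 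Condition~(a) reduces to $|1-p|<1$ and condition~(b) to $\sum_{k=0}^n|a_{nk}|=(|ap|+|1-p|)^n\le1$, whose conjunction is the hypothesis $|a|\le\min(1,2/p-1)$; convergence of the left-hand side is ensured because $s_1=m_1+2>1$, which in particular permits $a=1$.

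I expect the purely combinatorial bookkeeping of the first two paragraphs---verifying $\beta_d=|\bm s|-u_d$ and that the span-indices of the last block cleanly separate the trailing $u_d-1$ ones from the distinguished final argument---to be the main obstacle, while the analytic Toeplitz step is routine once Theorem~\ref{th:Li.1} is available. A figure analogous to Figure~\ref{fg:example}, but with a nonempty terminal group of ones, would be the most transparent way to make this tracking rigorous.
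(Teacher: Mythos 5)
Your proposal is correct and is exactly the argument the paper intends: the paper omits the proof of Theorem~\ref{th:Li.2} with the remark that the method is identical to that of Theorem~\ref{th:Li.1}, and your bookkeeping ($\beta_d=|\bm s|-u_d<|\bm s|$, so the bracket $(1-p+ap)^{n_{|\bm s|}}-(1-p)^{n_{|\bm s|}}$ no longer merges with $\bigl(\frac1{1-p}\bigr)^{n_{\beta_d}}$ and one $1$ of the terminal group $\{1\}_{u_d}$ is spent on the distinguished last argument, encoded by $\delta_{i,d}$) together with the verbatim Toeplitz step is precisely that adaptation. One remark: your index-by-index reading in fact yields $\bigsqcup_{i=1}^{d}$ in the \emph{second} polylogarithm of~\eqref{eq:Li.2} as well---the printed upper limit $d-1$ there appears to be a typo, since otherwise the depth does not equal $|\bm s|$, and the form with upper limit $d$ is confirmed by Corollary~\ref{cr:Li.21}.
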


\begin{remark}.
If we experimentally allow $u_d=0$ in Theorem~\ref{th:Li.2}, i.e.~$\bm s$ does not end with a group of ones, then the right-hand side of \eqref{eq:Li.2} is meaningless due to $\{1\}_{-1}$ originating from the argument~$\{1\}_{u_d-1}$. Of course, the `experimental' case $u_d=0$ in Theorem~\ref{th:Li.2} is handled by Theorem~\ref{th:Li.1}.
\end{remark}

We omit the proof of Theorem~\ref{th:Li.2}, for the method of deducing this statement is identical to that in the proof of Theorem~\ref{th:Li.1}. We now provide further particular cases following from Theorem~\ref{th:Li.2}.

\begin{corollary}
\label{cr:Li.21}
Assume that $m_i,u_i\in\N_0$, $d,u_d\in\N$, and
$
\bm s
	=\left(
		\sqcup_{i=1}^d\{m_i+2,\{1\}_{u_i}\}
	 \right)
$. If $p\in(0,1)$ then
\begin{align*}
\zeta^\star({\bm s})
 &=\Li^\star_{\{1\}_{|\bm s|}}
		\left(
			\textstyle
			\bigsqcup_{i=1}^d
			\bigl\{
				1-p,\{1\}_{m_i},\frac1{1-p},\{1\}_{u_i}
			\bigr\}
		\right)
	\\[.5em]
 &\,-\Li^\star_{\{1\}_{|\bm s|}}
		\left(
			\textstyle
			\bigsqcup_{i=1}^d
			\bigl\{
				1-p,\{1\}_{m_i},\frac1{1-p},\{1\}_{u_i-\delta_{i,d}}
			\bigr\},
			1-p
		\right),
\end{align*}
where $\delta_{i,j}$ denotes the Kronecker delta.
\end{corollary}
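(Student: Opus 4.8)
The plan is to derive Corollary~\ref{cr:Li.21} as the specialization $a=1$ of Theorem~\ref{th:Li.2}, exactly as Corollary~\ref{cr:Li.1} arises from Theorem~\ref{th:Li.1}. First I would read off the effect of $a=1$ on the left-hand side of~\eqref{eq:Li.2}: the trailing argument of $\Li^\star_{\bm s}(\{1\}_{\|\bm s\|-1},a)$ becomes $1$, so all arguments equal $1$ and the value collapses to $\zeta^\star(\bm s)$. Convergence causes no trouble, since the leading index is $s_1=m_1+2\ge 2>1$.

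Then I would simplify the two multiple polylogarithms on the right-hand side. In the first one the appended argument $1-p+ap$ becomes $1-p+p=1$ at $a=1$. The block immediately preceding it (the $i=d$ contribution) ends in $\{1\}_{u_d-1}$, produced by $\{1\}_{u_i-\delta_{i,d}}$ at $i=d$; adjoining one further argument equal to $1$ lengthens this ones-group from $\{1\}_{u_d-1}$ to $\{1\}_{u_d}$. The first polylogarithm therefore becomes $\Li^\star_{\{1\}_{|\bm s|}}\bigl(\sqcup_{i=1}^d\{1-p,\{1\}_{m_i},\tfrac1{1-p},\{1\}_{u_i}\}\bigr)$, matching the first term of the corollary. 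The second polylogarithm contains no occurrence of $a$, so it is untouched by the substitution and reproduces the corollary's second term directly, the Kronecker delta continuing to record the shortened trailing ones-group at $i=d$.

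Finally I would fix the range of $p$. The hypothesis $|a|\le\min(1,2/p-1)$ read at $a=1$ demands $1\le\min(1,2/p-1)$, hence $2/p-1\ge 1$, i.e.\ $2/p\ge 2$. For $p>0$ this means $p\le 1$, while $p<0$ is impossible (it would force $2/p-1<1$); together with the standing hypothesis $p\neq 1$ this leaves exactly $p\in(0,1)$, as asserted.

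The only delicate point will be the argument bookkeeping in the first polylogarithm, where the collapse $1-p+ap\mapsto 1$ must be fused correctly into the preceding block $\{1\}_{u_d-1}$ --- this is precisely the role of the Kronecker delta in Theorem~\ref{th:Li.2}. Everything else is a verbatim substitution, so I anticipate no further obstacle.
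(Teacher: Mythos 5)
Your proposal is correct and coincides with the paper's own (one-line) proof, which simply sets $a=1$ in Theorem~\ref{th:Li.2}; your extra bookkeeping --- fusing the argument $1-p+ap\mapsto 1$ into the trailing block $\{1\}_{u_d-1}$ to recover $\{1\}_{u_d}$, and solving $|a|\le\min(1,2/p-1)$ at $a=1$ to get $p\in(0,1)$ --- is exactly the verification the paper leaves implicit. No gaps.
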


\begin{proof}
Set $a=1$ in Theorem~\ref{th:Li.2}.
\qed
\end{proof}

In a similar vein as we utilized Corollary~\ref{cr:Li.1} for deducing Example~\ref{ex:Li.1}, we obtain an analogous evaluation following from Corollary~\ref{cr:Li.21}.

\begin{example}
\label{ex:Li.2}
Assume that $d\in\N$ and $p\in (0,1)$. Then
\[
 \Li^\star_{\{1\}_{2d+1}}
		\left(
			\textstyle
			\bigl\{
				1-p,\frac1{1-p}
			\bigr\}_d,1
		\right)
 -\Li^\star_{\{1\}_{2d+1}}
		\left(
			\textstyle
			\bigl\{
				1-p,\frac1{1-p}
			\bigr\}_d,
			1-p
		\right)
 =2\,\zeta(2d+1).
\]
\end{example}

\begin{proof}
Setting $m_i=d_i=0$, $i=1,\dots,d-1$, and $m_d=0$, $u_d=1$, we infer by Corollary~\ref{cr:Li.21} that
\[
\zeta^\star(\{2\}_d,1)
	=\Li^\star_{\{1\}_{2d+1}}
		\left(
			\textstyle
			\bigsqcup_{i=1}^d
			\bigl\{
				1-p,\frac1{1-p}
			\bigr\},1
		\right)
	-\Li^\star_{\{1\}_{2d+1}}
		\left(
			\textstyle
			\bigsqcup_{i=1}^d
			\bigl\{
				1-p,\frac1{1-p}
			\bigr\},
			1-p
		\right).
\]
Using the evaluation $\zeta^\star(\{2\}_d,1)=2\,\zeta(2d+1)$, see e.g.~Genčev~\cite[Eq.~(42)]{gencev.mhfm}, we arrive at the identity in Example~\ref{ex:Li.2}.
\qed
\end{proof}

Finally, we append the following transformations of multiple polylogarithms.

\begin{corollary}
\label{cr:Li.22}
Assume that $m_i,u_i\in\N_0$, $d,u_d\in\N$, $p\in(0,1)$, and
$
\bm s
	=\left(
		\sqcup_{i=1}^d\{m_i+2,\{1\}_{u_i}\}
	 \right)
$. If $(a,p)\in[-1,1/3]\times[ 1/2,3/2]$ with $p\neq 1$ and $|a|\le\min(1,2/p-1)$ then
\begin{align*}
&\Li^\star_{\{1\}_{|\bm s|}}
	\left(
		\textstyle
		\bigsqcup_{i=1}^{d-1}
		\bigl\{
			1-p,\{1\}_{m_i},\frac1{1-p},\{1\}_{u_i-\delta_{i,d}}
		\bigr\},
		1-p
	\right)
 =-\Li^\star_{\bm s}
 		\left(
			\{1\}_{\|\bm s\|-1},1-\tfrac1p
		\right),\\[.5em]
&\Li^\star_{\{1\}_{|\bm s|}}
	\left(
		\textstyle
		\bigsqcup_{i=1}^d
		\bigl\{
			1-p,\{1\}_{m_i},\frac1{1-p},\{1\}_{u_i-\delta_{i,d}}
		\bigr\},
		1-p+ap
	\right)
	\\[.5em]
 &\hskip5cm
  =\Li^\star_{\bm s}
 		\left(
			\{1\}_{\|\bm s\|-1},a
		\right)
	-\Li^\star_{\bm s}
 		\left(
			\{1\}_{\|\bm s\|-1},1-\tfrac1p
		\right).
\end{align*}
\end{corollary}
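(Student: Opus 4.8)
The plan is to transcribe the proof of Corollary~\ref{cr:Li.11} almost verbatim, replacing Theorem~\ref{th:Li.1} by Theorem~\ref{th:Li.2} and \eqref{eq:Li.10} by \eqref{eq:Li.2} throughout. The one algebraic point that makes everything work is the observation that the last argument $1-p+ap$ of the first polylogarithm on the right-hand side of \eqref{eq:Li.2} vanishes exactly at $a=1-1/p$, because $1-p+(1-\tfrac1p)p=1-p+p-1=0$. This is the precise analogue of the vanishing of $1+\tfrac{ap}{1-p}$ at the same value in the setting of Theorem~\ref{th:Li.1}.

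First I would establish the first identity. Substituting $a=1-1/p$ into \eqref{eq:Li.2}, the first polylogarithm on the right-hand side acquires last argument $1-p+ap=0$ and therefore vanishes: every summand of a $\Li^\star_{\{1\}_{|\bm s|}}(\dots,0)$ carries a factor $0^{n_{|\bm s|}}=0$ since $n_{|\bm s|}\ge 1$, exactly as recorded in \eqref{eq:result}. What survives of \eqref{eq:Li.2} is
\[
\Li^\star_{\bm s}\bigl(\{1\}_{\|\bm s\|-1},1-\tfrac1p\bigr)
	=-\Li^\star_{\{1\}_{|\bm s|}}
		\Bigl(\textstyle\bigsqcup_{i=1}^{d-1}\bigl\{1-p,\{1\}_{m_i},\tfrac1{1-p},\{1\}_{u_i-\delta_{i,d}}\bigr\},1-p\Bigr),
\]
which is the first claimed identity after transposing the sign.

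Next I would feed this back into \eqref{eq:Li.2} to obtain the second identity. The second polylogarithm on the right-hand side of \eqref{eq:Li.2} is exactly the term appearing on the left above, hence equals $-\Li^\star_{\bm s}(\{1\}_{\|\bm s\|-1},1-\tfrac1p)$. Inserting this and isolating the remaining polylogarithm yields
\[
\Li^\star_{\{1\}_{|\bm s|}}
	\Bigl(\textstyle\bigsqcup_{i=1}^{d}\bigl\{1-p,\{1\}_{m_i},\tfrac1{1-p},\{1\}_{u_i-\delta_{i,d}}\bigr\},1-p+ap\Bigr)
	=\Li^\star_{\bm s}\bigl(\{1\}_{\|\bm s\|-1},a\bigr)
	 -\Li^\star_{\bm s}\bigl(\{1\}_{\|\bm s\|-1},1-\tfrac1p\bigr),
\]
as asserted.

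Finally I would pin down the parameter ranges exactly as in Corollary~\ref{cr:Li.11}. Demanding that the choice $a=1-1/p$ satisfy $|a|\le\min(1,2/p-1)$ forces $p\in[1/2,3/2]$; intersecting with the standing hypothesis $p\in(0,1)$ restricts $p$ to $[1/2,1)$, and the constraint $|a|\le\min(1,2/p-1)$ over this range then confines the free parameter of the second identity to $a\in[-1,1/3]$. I do not expect any genuine obstacle here: the proof is a direct adaptation of that of Corollary~\ref{cr:Li.11}, and the only step requiring care is the elementary simplification $\left.(1-p+ap)\right|_{a=1-1/p}=0$ that collapses the first polylogarithm.
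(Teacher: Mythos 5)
Your proposal is correct and is precisely the argument the paper intends: the paper omits the proof of Corollary~\ref{cr:Li.22} outright, remarking only that it is essentially similar to the proof of Corollary~\ref{cr:Li.11}, and your adaptation---substituting $a=1-\tfrac1p$ into \eqref{eq:Li.2} so that the final argument $1-p+ap$ vanishes and kills the first polylogarithm, then feeding the resulting identity back into \eqref{eq:Li.2} and intersecting the admissibility conditions to get $p\in[1/2,3/2]$, $a\in[-1,1/3]$---is exactly that adaptation. No gaps; your filled-in parameter discussion (including the intersection with the standing hypothesis $p\in(0,1)$) matches the mechanics of the paper's proof of Corollary~\ref{cr:Li.11} verbatim.
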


We omit the proof because of the essential similarity to the proof of Corollary~\ref{cr:Li.11}.

\section{Arithmetic means of multiple harmonic-star numbers}
\label{sec:arithmetic}

With the help of the findings established in the previous sections, we develop additional results concerning arithmetic means of the generalized multiple harmonic-star numbers $\zeta^\star_k(\bm s;a)$, $k=0,\dots,n$, including the limiting case $n\to\infty$. Our main result in this section is the following general statement, which forms the basis for further applications.

\begin{theorem}
\label{th:mean}
Assume that $a\in\R$, $\bm s=(s_1,\dots,s_d)\in\N^d$, and define
\begin{equation}
\label{eq:Qs.def}
Q(\bm s)
	:=\sum_{r=1}^d
		\left(
			n_{|\bm s|_{r-1}+1}
		 -n_{|\bm s|_r}
		\right).
\end{equation}
Then
\begin{equation}
\label{eq:mean}
\frac1{n+1}\cdot
\sum_{k=1}^n\zeta^\star_k(\bm s;a)
	=\sum_{n\ge n_1\ge\cdots\ge n_{|\bm s|+1}\ge 1}
		\frac{%
			\binom{n_{|\bm s|}}{n_{|\bm s|+1}}}{%
			\binom{Q(\bm s)+n_{|\bm s|}}{n_{|\bm s|+1}}}
		\cdot
		\frac{
			a^{n_{|\bm s|+1}}}{%
			(Q(\bm s)+n_{|\bm s|}+1)\cdot n_1\cdots n_{|\bm s|}}.
\end{equation}
\end{theorem}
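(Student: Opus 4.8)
The plan is to obtain the left-hand side of \eqref{eq:mean} by integrating the transformed sum $M_n^{(\bm s)}(a,p)$ of Theorem~\ref{th:main} against $\mathrm dp$ over $[0,1]$. The starting point is the Euler Beta integral
\[
\int_0^1 p^k(1-p)^{n-k}\,\mathrm dp=\frac{k!\,(n-k)!}{(n+1)!}=\frac1{(n+1)\binom nk}.
\]
Applying this term by term to the defining expression \eqref{eq:xmneimneh.ap.def} collapses the binomial weight and produces exactly the arithmetic mean:
\[
\int_0^1 M_n^{(\bm s)}(a,p)\,\mathrm dp=\frac1{n+1}\sum_{k=1}^n\zeta^\star_k(\bm s;a).
\]
Because $M_n^{(\bm s)}(a,p)$ is a \emph{finite} sum (every summation index is bounded by $n$), no convergence issue arises and all interchanges of summation and integration below are legitimate.

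Next I would insert the right-hand side of \eqref{eq:main}, using that $\prod_{r=1}^d(1-p)^{n_{|\bm s|_{r-1}+1}-n_{|\bm s|_r}}=(1-p)^{Q(\bm s)}$ by the very definition \eqref{eq:Qs.def} of $Q(\bm s)$. Integration then reduces, for each fixed admissible tuple $(n_1,\dots,n_{|\bm s|})$, to evaluating
\[
\int_0^1(1-p)^{Q(\bm s)}\Bigl[(1-p+ap)^{n_{|\bm s|}}-(1-p)^{n_{|\bm s|}}\Bigr]\,\mathrm dp.
\]
Expanding $(1-p+ap)^{n_{|\bm s|}}=\sum_{M=0}^{n_{|\bm s|}}\binom{n_{|\bm s|}}{M}a^Mp^M(1-p)^{n_{|\bm s|}-M}$ and integrating each monomial again via the Beta function, the $M=0$ term equals $\int_0^1(1-p)^{Q(\bm s)+n_{|\bm s|}}\,\mathrm dp$, which \emph{cancels exactly} against the contribution of the subtracted term $(1-p)^{n_{|\bm s|}}$. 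This cancellation is the clean structural observation that removes the $M=0$ summand and explains why the final sum starts at $n_{|\bm s|+1}\ge 1$.

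For the surviving terms $1\le M\le n_{|\bm s|}$ I would simplify the factorials: writing $N:=n_{|\bm s|}$ and $Q:=Q(\bm s)$, one has
\[
\binom NM a^M\frac{M!\,(Q+N-M)!}{(Q+N+1)!}
=\frac{\binom NM}{\binom{Q+N}{M}}\cdot\frac{a^M}{Q+N+1},
\]
which is precisely the summand required. Finally, renaming $M=n_{|\bm s|+1}$ and absorbing it as the innermost index of the nested sum yields the right-hand side of \eqref{eq:mean}. The main obstacle is not conceptual but the bookkeeping of the binomial-ratio identity above together with the recognition of the $M=0$ cancellation; once these are handled, the result follows with no analytic subtleties since everything in sight is a finite sum.
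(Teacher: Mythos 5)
Your proposal is correct and follows essentially the same route as the paper's own proof: both evaluate $\int_0^1 M_n^{(\bm s)}(a,p)\,\mathrm dp$ in two ways, using the Beta integral on the defining expression \eqref{eq:xmneimneh.ap.def} to produce the arithmetic mean, and inserting the transformed form \eqref{eq:main} with $(1-p)^{Q(\bm s)}$, the binomial expansion, the cancellation of the constant-in-$a$ term against $\int_0^1(1-p)^{Q(\bm s)+n_{|\bm s|}}\,\mathrm dp$, and the simplification to the ratio $\binom{n_{|\bm s|}}{M}/\binom{Q(\bm s)+n_{|\bm s|}}{M}$ before renaming $M=n_{|\bm s|+1}$ as the innermost index. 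The only cosmetic difference is that you expand $(1-p+ap)^{n_{|\bm s|}}$ directly in powers of $p$, while the paper expands in powers of $(1-p)$ and applies the symmetry $\binom nj=\binom n{n-j}$ at the end; these are the same computation.
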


Even if the theorem is rather complex, we show that its particular cases are interesting as the limiting forms of \eqref{eq:mean} for $n\to\infty$ imply new identities. Before we approach these applications, we first prove Theorem~\ref{th:mean}.

\begin{proof}
The idea of the proof is straightforward; we perform the integration $\int_0^1M_n^{(\bm s)}(a,p)\,\mathrm dp$ in two different ways. The first integration consists in considering the defining expression of the sum~$M_n^{(\bm s)}(a,p)$ in \eqref{eq:xmneimneh.ap.def}, the second one in considering the transformed variant from Theorem~\ref{th:main}. Let us realize these steps below.

Integrating the defining expression given in~\eqref{eq:xmneimneh.ap.def} yields
\begin{align}
\notag
\int_0^1 M_n^{(\bm s)}(a,p)\,\mathrm dp
	&=\sum_{k=1}^n\binom nk\cdot\zeta_k^\star(\bm s;a)\cdot
		\int_0^1 p^k\cdot(1-p)^{n-k}\,\mathrm dp\\[.5em]
\label{eq:AP.1}
	&=\frac1{n+1}\cdot\sum_{k=1}^n\zeta_k^\star(\bm s;a),
\end{align}
where we used the beta-type integral evaluation $\int_0^1 p^k\cdot(1-p)^{n-k}\,\mathrm dp=1/\bigl((n+1)\cdot\binom nk\bigr)$.

In the second step, we employ the formula from Theorem~\ref{th:main}. Considering the definition of the $Q(\bm s)$ in \eqref{eq:Qs.def}, we obtain
\begin{align}
\notag
\int_0^1 &M_n^{(\bm s)}(a,p)\,\mathrm dp\\
	&=\int_0^1\sum_{n\ge n_1\ge\cdots\ge n_{|\bm s|}\ge 1}
		\frac{(1-p)^{Q(\bm s)}}{n_1\cdots n_{|\bm s|}}\cdot
		\Bigl[
		 (1-p+ap)^{n_{|\bm s|}}
		-(1-p)^{n_{|\bm s|}}
		\Bigr]\mathrm dp\\[.5em]
\notag
	&=\sum_{n\ge n_1\ge\cdots\ge n_{|\bm s|}\ge 1}\int_0^1
		\frac{%
			(1-p)^{Q(\bm s)}\cdot(1-p+ap)^{n_{|\bm s|}}
		 -(1-p)^{Q(\bm s)+n_{|\bm s|}}}{%
		 n_1\cdots n_{|\bm s|}}\,\mathrm dp\\[.5em]
\notag
	&=\sum_{n\ge n_1\ge\cdots\ge n_{|\bm s|}\ge 1}\int_0^1
		\frac{%
			\sum_{j=0}^{n_{|\bm s|}}
				\binom{n_{|\bm s|}}j\cdot
				(1-p)^{Q(\bm s)+j}\cdot (ap)^{n_{|\bm s|}-j}
		 -(1-p)^{Q(\bm s)+n_{|\bm s|}}}{%
		 n_1\cdots n_{|\bm s|}}\,\mathrm dp\\[.5em]
\notag
	&=\sum_{n\ge n_1\ge\cdots\ge n_{|\bm s|}\ge 1}
		\frac{%
			\sum_{j=0}^{n_{|\bm s|}}
				\binom{n_{|\bm s|}}j\cdot a^{n_{|\bm s|}-j}\cdot
				\frac1{(Q(\bm s)+n_{|\bm s|}+1)\cdot\binom{Q(\bm s)+n_{|\bm s|}}{n_{|\bm s|}-j}}
			 -\frac1{Q(\bm s)+n_{|\bm s|}+1}}{%
		 n_1\cdots n_{|\bm s|}}\\[.5em]
\notag
	&=\sum_{n\ge n_1\ge\cdots\ge n_{|\bm s|}\ge 1}
		\frac{%
			\sum_{j=0}^{n_{|\bm s|}}
				\frac{\binom{n_{|\bm s|}}j}{\binom{Q(\bm s)+n_{|\bm s|}}{n_{|\bm s|}-j}}\cdot a^{n_{|\bm s|}-j}
			 -1}{%
			 (Q(\bm s)+n_{|\bm s|}+1)\cdot
			 n_1\cdots n_{|\bm s|}}\\[.5em]
\label{eq:AP.2}
	&=\sum_{n\ge n_1\ge\cdots\ge n_{|\bm s|+1}\ge 1}
		\frac{%
			\binom{n_{|\bm s|}}{n_{|\bm s|+1}}}{%
			\binom{Q(\bm s)+n_{|\bm s|}}{n_{|\bm s|+1}}}
		\cdot
		\frac{%
			a^{n_{|\bm s|+1}}}{%
			(Q(\bm s)+n_{|\bm s|}+1)\cdot
			n_1\cdots n_{|\bm s|}},
\end{align}
where we used thy symmetry of binomial coefficients $\binom nj=\binom n{n-j}$ in the last step. Comparing the expressions in \eqref{eq:AP.1} and~\eqref{eq:AP.2} concludes the proof.
\qed
\end{proof}

Recall that the expression $Q(\bm s)$ can essentially be simplified if the components of $\bm s$ are equal to $1$. We thus present the form of Theorem~\ref{th:mean} for $\bm s=(\{1\}_d)$ in the following example.

\begin{example}
\label{ex:mean.1}
Assume that $d,n\in\N$. Then
\[
\frac1{n+1}\cdot\sum_{k=1}^n\zeta^\star_k(\{1\}_d)
	=\sum_{n\ge n_1\ge\cdots\ge n_d\ge 1}\frac1{n_1\cdots n_{d-1}\cdot(n_d+1)},
\]
where the product $n_1\cdots n_{d-1}$ is considered as empty for $d=1$.
\end{example}
\goodbreak

Notice that for $d=1$, Example~\ref{ex:mean.1} implies the identity $\sum_{k=1}^n H_k=(n+1)\cdot(H_{n+1}-1)$, which is well known.

\begin{proof}[of Example~\ref{ex:mean.1}]
Setting $\bm s=(\{1\}_d)$ in Theorem~\ref{th:mean} implies $Q(\{1\}_d)=0$ following from the definition of $Q(\bm s)$ in~\eqref{eq:Qs.def}. Therefore, from $|\bm s|=d$ and by \eqref{eq:mean}, we obtain
\begin{align*}
\frac1{n+1}\cdot\sum_{k=1}^n\zeta^\star_k(\{1\}_d)
 &=\sum_{n\ge n_1\ge\cdots\ge n_{d+1}\ge 1}
		\frac{%
			\binom{n_d}{n_{d+1}}}{%
			\binom{n_d}{n_{d+1}}}\cdot
		\frac1{n_1\cdots n_d\cdot(n_d+1)}\\[.5em]
 &=\sum_{n\ge n_1\ge\cdots\ge n_d\ge 1}
		\frac1{n_1\cdots n_d\cdot(n_d+1)}
	 \sum_{n_{d+1}=1}^{n_d}1\\[.5em]
 &=\sum_{n\ge n_1\ge\cdots\ge n_d\ge 1}
		\frac1{n_1\cdots n_{d-1}\cdot(n_d+1)},
\end{align*}
which concludes the proof.
\qed
\end{proof}

Let us now study particular cases of Theorem~\ref{th:mean} for $n\to\infty$. We present two corollaries offering new series representations of $\Li^\star_{\bm s}(\{1\}_{\|\bm s\|-1},a)$ and $\zeta^\star(\bm s)$.

\begin{corollary}
\label{cr:mean.1}
Assume that $a\in\R$, $|a|\le 1$, $\bm s=(s_1,\dots,s_d)\in\N^d$, and let $Q(\bm s)$ be defined in~\eqref{eq:Qs.def}. Then
\begin{equation}
\label{eq:mean.1}
\Li^\star_{\bm s}(\{1\}_{\|\bm s\|-1},a)
 =\sum_{n_1\ge\cdots\ge n_{|\bm s|+1}\ge 1}
	\frac{%
		\binom{n_{|\bm s|}}{n_{|\bm s|+1}}}{%
		\binom{Q(\bm s)+n_{|\bm s|}}{n_{|\bm s|+1}}}
	\cdot
	\frac{
		a^{n_{|\bm s|+1}}}{%
		(Q(\bm s)+n_{|\bm s|}+1)\cdot n_1\cdots n_{|\bm s|}}
\end{equation}
whenever the polylogarithm on the left-hand side converges.
\end{corollary}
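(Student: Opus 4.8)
The plan is to derive \eqref{eq:mean.1} as the limiting case $n\to\infty$ of Theorem~\ref{th:mean}, the key tool being the Ces\`aro-type consequence of the Toeplitz theorem recorded in Remark~\ref{rm:toeplitz}. First I would read the left-hand side of \eqref{eq:mean} as the arithmetic mean $\frac1{n+1}\sum_{k=0}^n x_k'$ of the sequence $x_k':=\zeta_k^\star(\bm s;a)$, the $k=0$ term being $0$ and hence harmless.

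Next I would observe that $\zeta_k^\star(\bm s;a)=\sum_{k\ge n_1\ge\cdots\ge n_d\ge 1}a^{n_d}/(n_1^{s_1}\cdots n_d^{s_d})$ is precisely the partial sum of $\Li^\star_{\bm s}(\{1\}_{\|\bm s\|-1},a)$ obtained by imposing the additional constraint $n_1\le k$. Consequently, under the standing hypothesis that the polylogarithm on the left of \eqref{eq:mean.1} converges, one has $x_k'\to\xi:=\Li^\star_{\bm s}(\{1\}_{\|\bm s\|-1},a)$ as $k\to\infty$. Applying Remark~\ref{rm:toeplitz} to this convergent sequence $x_k'$ then yields $\frac1{n+1}\sum_{k=1}^n\zeta_k^\star(\bm s;a)\to\Li^\star_{\bm s}(\{1\}_{\|\bm s\|-1},a)$ as $n\to\infty$; that is, the left-hand side of \eqref{eq:mean} converges to the left-hand side of \eqref{eq:mean.1}.

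It then remains to identify the limit of the right-hand side of \eqref{eq:mean}. The crucial observation is that the right-hand side of \eqref{eq:mean} is exactly the partial sum of the infinite nested series in \eqref{eq:mean.1}, namely the sub-sum in which the outermost index is restricted by $n_1\le n$. Hence letting $n\to\infty$ simply removes the constraint $n\ge n_1$, turning the finite nested sum into the full nested series. Because the right-hand side of \eqref{eq:mean} equals its left-hand side for every $n\in\N$, and the latter converges by the previous step, this limit exists; comparing the two limits gives \eqref{eq:mean.1}.

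The one point requiring care, and the main (rather modest) obstacle, is justifying that the limit of these truncated nested sums really is the infinite series in \eqref{eq:mean.1}, equivalently that the series on the right of \eqref{eq:mean.1} converges. For $a\ge 0$ all summands are nonnegative and the claim is immediate by monotone convergence. For general $a$ with $|a|\le 1$ I would simply note that the truncated sums form a convergent sequence, since they coincide for each $n$ with the convergent left-hand side of \eqref{eq:mean}; their limit is therefore, by definition, the value of the nested series, and no estimate beyond the assumed convergence of the left-hand polylogarithm is needed.
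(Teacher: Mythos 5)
Your proposal is correct and takes essentially the same route as the paper's own proof: both pass to the limit $n\to\infty$ in Theorem~\ref{th:mean}, invoke Remark~\ref{rm:toeplitz} to identify the Ces\`aro limit of the left-hand side of \eqref{eq:mean} as $\Li^\star_{\bm s}(\{1\}_{\|\bm s\|-1},a)$ (using that $\zeta_k^\star(\bm s;a)$ are exactly its partial sums), and observe that the right-hand side is the $n_1\le n$ truncation of the nested series in \eqref{eq:mean.1}, whose limit exists precisely because it coincides for every $n$ with the convergent left-hand side. Your closing discussion of the $a\ge 0$ case via monotone convergence is a harmless elaboration of what the paper leaves implicit.
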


\begin{proof}
The proof is straightforward; it suffices to calculate the limits of both sides of \eqref{eq:mean} for~$n\to\infty$. Of course, the limit of the left-hand side of \eqref{eq:mean} can be calculated with the help of Remark~\ref{rm:toeplitz}. Consequently, we recognize the value of this limit as $\Li^\star_{\bm s}(\{1\}_{\|\bm s\|-1},a)$ assuming its convergence. Taking into account the simpler limit of the right-hand side of~\eqref{eq:mean} completes the proof since the existence of this limit is uniquely determined by the convergence of~$\Li^\star_{\bm s}(\{1\}_{\|\bm s\|-1},a)$ on the left, which was assumed above.
\qed
\end{proof}

\begin{corollary}
\label{cr:mean.2}
Assume that $\bm s=(s_1,\dots,s_d)\in\N^d$, $s_1>1$, and let $Q(\bm s)$ be defined in \eqref{eq:Qs.def}. Then
\begin{equation}
\label{eq:mean.2}
\zeta^\star(\bm s)
 =\sum_{n_1\ge\cdots\ge n_{|\bm s|}\ge 1}
	\frac1{%
		(Q(\bm s)+1)\cdot(Q(\bm s)+n_{|\bm s|}+1)\cdot n_1\cdots n_{|\bm s|-1}}.
\end{equation}
\end{corollary}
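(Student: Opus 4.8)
The plan is to derive Corollary~\ref{cr:mean.2} as the special case $a=1$ of Corollary~\ref{cr:mean.1}. Since $s_1>1$ guarantees the convergence of $\zeta^\star(\bm s)=\Li^\star_{\bm s}(\{1\}_{\|\bm s\|-1},1)$, the left-hand side of \eqref{eq:mean.1} becomes exactly $\zeta^\star(\bm s)$, and I may substitute $a=1$ directly into the right-hand side. The only work remaining is to collapse the innermost summation over $n_{|\bm s|+1}$, because after setting $a=1$ the summand no longer depends on $n_{|\bm s|+1}$ through any power of $a$.

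Concretely, after putting $a=1$ the right-hand side of \eqref{eq:mean.1} reads
\[
\sum_{n_1\ge\cdots\ge n_{|\bm s|+1}\ge 1}
	\frac{\binom{n_{|\bm s|}}{n_{|\bm s|+1}}}{\binom{Q(\bm s)+n_{|\bm s|}}{n_{|\bm s|+1}}}
	\cdot\frac1{(Q(\bm s)+n_{|\bm s|}+1)\cdot n_1\cdots n_{|\bm s|}}.
\]
The inner sum over $n_{|\bm s|+1}$, ranging from $1$ to $n_{|\bm s|}$, therefore reduces to evaluating $\sum_{j=1}^{n_{|\bm s|}}\binom{n_{|\bm s|}}{j}\big/\binom{Q(\bm s)+n_{|\bm s|}}{j}$. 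The key step is to recognize this as a telescoping or standard binomial-ratio identity. First I would extend the sum to include $j=0$ (whose term equals $1$) and apply the classical evaluation
\[
\sum_{j=0}^{m}\frac{\binom mj}{\binom{N}{j}}
	=\frac{N+1}{N+1-m},
\qquad N=Q(\bm s)+n_{|\bm s|},\ m=n_{|\bm s|},
\]
so that $N+1-m=Q(\bm s)+1$. Subtracting the $j=0$ term and combining with the prefactor $1/(Q(\bm s)+n_{|\bm s|}+1)$ then yields a net factor of $\tfrac1{(Q(\bm s)+1)(Q(\bm s)+n_{|\bm s|}+1)}\cdot\tfrac1{n_1\cdots n_{|\bm s|-1}}$ after one power of $n_{|\bm s|}$ cancels, which is precisely the summand in \eqref{eq:mean.2}.

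The main obstacle I anticipate is justifying and cleanly applying the binomial-ratio evaluation above, since the routine integral-based proofs in this paper tend to manipulate such sums directly rather than cite a named identity; one must verify the formula $\sum_{j=0}^m\binom mj/\binom Nj=(N+1)/(N+1-m)$ holds for $N=Q(\bm s)+n_{|\bm s|}\ge n_{|\bm s|}=m$, which is always the case here since $Q(\bm s)\ge 0$. A minor secondary point is confirming that after the inner summation the surviving factor of $n_{|\bm s|}$ in the denominator is absorbed correctly so that only $n_1\cdots n_{|\bm s|-1}$ remains, matching the stated right-hand side; this is a bookkeeping check rather than a genuine difficulty.
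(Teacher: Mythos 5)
Your proposal is correct and follows essentially the same route as the paper: set $a=1$ in Corollary~\ref{cr:mean.1} (convergence from $s_1>1$), collapse the inner sum over $n_{|\bm s|+1}$ via the binomial-ratio identity, and cancel one factor of $n_{|\bm s|}$. The paper simply cites the identity in the equivalent form $\sum_{k=1}^m\binom mk\big/\binom nk=\frac m{n+1-m}$ rather than your $j=0$-extended version $\sum_{j=0}^m\binom mj\big/\binom Nj=\frac{N+1}{N+1-m}$, which amounts to the same computation.
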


\begin{proof}
Setting $a=1$ into the right-hand side of \eqref{eq:mean.1}, we obtain $\Li^\star_{\bm s}(\{1\}_{\|\bm s\|-1},1)=\zeta^\star(\bm s)$ representing a convergent value due $s_1>1$. On the contrary, the right-hand side of~\eqref{eq:mean.1} reads for $a=1$
\begin{align*}
\sum_{n_1\ge\cdots\ge n_{|\bm s|+1}\ge 1}
 &\frac{%
		\binom{n_{|\bm s|}}{n_{|\bm s|+1}}}{%
		\binom{Q(\bm s)+n_{|\bm s|}}{n_{|\bm s|+1}}}
	\cdot
	\frac1{(Q(\bm s)+n_{|\bm s|}+1)\cdot n_1\cdots n_{|\bm s|}}
	\\[.5em]
 &=\sum_{n_1\ge\cdots\ge n_{|\bm s|}\ge 1}
		\frac1{(Q(\bm s)+n_{|\bm s|}+1)\cdot n_1\cdots n_{|\bm s|}}
		\cdot
		\sum_{n_{|\bm s|+1}=1}^{n_{|\bm s|}}
		\frac{%
			\binom{n_{|\bm s|}}{n_{|\bm s|+1}}}{%
			\binom{Q(\bm s)+n_{|\bm s|}}{n_{|\bm s|+1}}}\\
 &=\sum_{n_1\ge\cdots\ge n_{|\bm s|}\ge 1}
		\frac1{(Q(\bm s)+n_{|\bm s|}+1)\cdot n_1\cdots n_{|\bm s|}}
		\cdot\frac{n_{|\bm s|}}{Q(\bm s)+1},
\end{align*}
where we used the identity $\sum_{k=1}^m\binom mk/\binom nk=\frac m{n+1-m}$. After canceling the variable~$n_{|\bm s|}$, the proof is completed.
\qed
\end{proof}

\begin{example}
\label{ex:mean.2}
Assume that $d\in\N$. Then
\begin{equation}
\label{eqx:mean.2}
\sum_{n_1\ge\cdots\ge n_{2d}\ge 1}
	\frac1{%
		\left(1+\sum_{i=1}^{2d}(-1)^{i-1}\cdot n_i\right)\cdot\left(1+\sum_{i=1}^{2d-1}(-1)^{i-1}\cdot n_i\right)\cdot n_1\cdots n_{2d-1}}
	=\zeta^\star(\{2\}_d).
\end{equation}
\end{example}

It is worth mentioning that the last formula for $d=1$ is well known. In this case, the identity in \eqref{eqx:mean.2} reduces to
\[
\sum_{n_1\ge n_2\ge 1}\frac1{(1+n_1-n_2)\cdot(1+n_1)\cdot n_1}=\zeta(2)
\]
whose left-hand side can be written as $\sum_{n_1\ge 1}H_{n_1}/(n_1\cdot(1+n_1))$, where $H_{n_1}$ denotes the harmonic number.

\begin{proof}[of Example~\ref{ex:mean.2}]
It is not hard to see that for $\bm s=(\{2\}_d)$, the definition of $Q(\bm s)$ in~\eqref{eq:Qs.def} implies
$
Q(\bm s)
	=\sum_{i=1}^d(n_{2i-1}-n_{2i})
	=\sum_{i=1}^{2d}(-1)^{i-1}\cdot n_i.	
$
Applying this formula to \eqref{eq:mean.2} and performing simplifications, we immediately obtain~\eqref{eqx:mean.2}.
\qed
\end{proof}

\begin{remark}
We point out the well-known evaluation $\zeta^\star(\{2\}_d)=\bigl(2-4^{1-d}\bigr)\cdot\zeta(2d)$, that can be applied to~\eqref{eqx:mean.2}.
\end{remark}

\section*{Acknowledgement}

The author would like to thank Prof.~Ce Xu (Anhui Normal University) for informing me about the conjecture in~\cite{pan.xu} and for his support during the course of this work.

\section*{Conflict of interest}

The author declares that he has no conflict of interest.

\section*{Funding}

No funding was received for conducting this study.
The author has no relevant financial or non-financial interests to disclose.

\section*{Declaration of Generative AI and AI-assisted technologies in the writing process}

No content generated by AI technologies has been used in this work.

\section*{Data availability}

No data was used for the research described in the article.

\bibliographystyle{spmpsci}
\bibliography{database}

\end{document}